\newcommand{\f}{\mathrm{Fix}}
\newtheorem{theorem}{Theorem}[section]
\newtheorem{definition}[theorem]{Definition}
\newtheorem{lemma}[theorem]{Lemma}
\newtheorem{proposition}[theorem]{Proposition}
\newtheorem{remark}[theorem]{Remark}
\newtheorem{example}[theorem]{Example}
\newtheorem{observation}[theorem]{Observation}
\def \ord {\operatorname{ord}}
\begin{document}

\title[On the functional graph of $f(X)=X(X^{q-1}-c)^{q+1}$]{On the functional graph of $f(X)=X(X^{q-1}-c)^{q+1},$ over quadratic extensions of finite fields}


\author{Josimar J.R. Aguirre, Ab\'ilio Lemos and Victor G.L. Neumann}

\maketitle


\vspace{8ex}
\noindent
\textbf{Keywords:} Functional graph, polynomial dynamics, quadratic extensions\\
\noindent
\textbf{MSC:} 12E20, 11T06, 05C20

\begin{abstract}
Let $\mathbb{F}_{q}$ be the finite field with $q$ elements. 
In this paper we will describe the dynamics of the map $f(X)=X(X^{q-1}-c)^{q+1},$ with $c\in\mathbb{F}_{q}^{\ast},$ over the finite field $\mathbb{F}_{q^2}$.
\end{abstract}

\section{Introduction and Notations}

For $q$ a prime power, denote $\mathbb{F}_{q}$ the finite field with $q$ elements, $\mathbb{F}_{q}^{*}=\mathbb{F}_{q} \backslash\{0\}$, and $\mathbb{F}_{q}[x]$ the ring of polynomials over $\mathbb{F}_{q}$. For $\nu\in\mathbb{N},$ let $\mathbb{F}_{q^\nu}$ be the extension of $\mathbb{F}_{q}$ of degree $\nu.$ We define the {\it functional graph} $f:\mathbb{F}_{q^\nu}\longrightarrow\mathbb{F}_{q^\nu}$ as the directed graph $\mathcal{G}(f)=(\mathcal{V},\mathcal{E})$ where $\mathcal{V}=\mathbb{F}_{q^\nu}$ and $\mathcal{E}=\{\langle x,f(x) \rangle \mid x \in \mathbb{F}_{q^\nu} \}$. We define the {\it iterations} of $f$ as $f^{(0)}(x)=x$ and $f^{(n+1)}(x)=f(f^{(n)}(x))$.
Fixing $\alpha \in \mathbb{F}_{q^\nu}$, there are minimal integers $0 \leq i<j$ such that $f^{(i)}(\alpha)=f^{(j)}(\alpha)$.
In the case when $i>0$, we call the list $\alpha, f(\alpha), f^{(2)}(\alpha), \ldots, f^{(i-1)}(\alpha)$ a
{\it pre-cycle} and $f^{(i)}(\alpha), f^{(i+1)}(\alpha), \ldots, f^{(j-1)}(\alpha)$ a {\it cycle} of length $(j-i)$  or
a  $(j-i)$-cycle. If $\alpha$ is an element of a cycle, we call it a {\it periodic element} and, if $f(\alpha)=\alpha$ we say it is a {\it fixed point}. For each element $\alpha \in \mathbb{F}_{q^\nu}$, we define $f^{-1}(\alpha)=\{\gamma \in \mathbb{F}_{q^\nu} \mid f(\gamma)= \alpha \}$ and  $\f(f)=\{\alpha\in\mathbb{F}_{q^\nu} \mid f(\alpha)=\alpha\}$ as the {\it set of pre-images} of $\alpha$ and the {\it set of fixed points} of $f,$ respectively. We will  introduce some graph notation as in \cite{AB,BT, QP,QR,QR2}. Let $m$ and $n$ be positive integers.
\begin{enumerate}
	\item $\mathcal{C}_n$ denotes an {\it oriented cycle} of length $n.$
	\item $\mathcal{T}_m$ denotes the {\it directed tree} composed by $m+1$ points, $P_1,P_2,\ldots P_{m+1}$, where $P_i$ is directed to $P_{m+1}$
	for all $1 \le i \le m.$
	\item $(\mathcal{C}_n,\mathcal{T}_m)$ denotes the graph obtained after replacing each point of
	$\mathcal{C}_n$ by a tree
	isomorphic to $\mathcal{T}_m$. 
	\item $ \mathcal{G}\oplus  \mathcal{H}$ denotes the disjoint union of the graphs $ \mathcal{G}$ and $ \mathcal{H}$.
	\item $\mathcal{G} = k \times \mathcal{H}$ means that $\mathcal{G} = \bigoplus_{i=1}^k \mathcal{H}_i$, where each $\mathcal{H}_i$ is isomorphic to $\mathcal{H}$.
\end{enumerate}

With regard to the study of maps of type $f:\mathbb{F}_{q^\nu}\longrightarrow\mathbb{F}_{q^\nu}$, we are interested in determining

\begin{enumerate}
	\item The number of fixed points.
	\item The number of cycles and its respective lengths.
	\item The number of connected components.
\end{enumerate}

The dynamic of iterations of polynomials over finite fields have attracted much attention in recent years, in part due to their applications in cryptography and integer factorization methods like \textit{Pollard rho algorithm} (see \cite{pollard}, \cite{pollard2}). The study of this kind of dynamics was initiated by Vivaldi \cite{vivaldi} and continued throughout the years, studying properties of {\it functional graph} from different important applications in finite fields.
In \cite{PR}, L. Reis and D. Panario have described the functional graph associated to the $\mathbb{F}_q$-linearized irreducible divisors of $x^n-1$. In \cite{chow}, Chow and Shparlinski obtained results on repeated exponentiation modulo a prime. In \cite{QP}, \cite{QP2}, Chebyshev polynomials and Redei functions were studied respectively.
Some rational maps have also been treated, such as $x + x^{-1}$ for \textit{char}$(\mathbb{F}_q) = \{2,3,5\}$ in \cite{ugolini} and their generalizations in \cite{ugolini2}. In \cite{ugolini3}, Ugolini studied rational maps induced by endomorphisms of ordinary elliptic curves.
Recently, in \cite{AB}, Alves J. and Brochero F. studied the digraph associated to the map $x \to g(x) := x^n h\left(x^{\frac{q-1}{m}}\right)$ where $h(x) \in \mathbb{F}_q[x]$, $n>1$ and $m$ is a positive integer determined by $g(x)$.
Also, in \cite{BT}, Brochero F. and Teixeira H. R. described completely the dynamics of the map $f(X)=c(X^{q+1}+aX^2)$, for $a=\{ \pm 1\}$ and $c \in \mathbb{F}_q^*$, over the finite field $\mathbb{F}_{q^2}$.



A polynomial $f \in \mathbb{F}_{q^\nu}[x]$ is called a {\it permutation polynomial} (PP) of $\mathbb{F}_{q^\nu}$ if it induces a bijection from $\mathbb{F}_{q^\nu}$ to itself. It is clear that if a map is defined by a PP, then its functional graph is composed only by cycles. But, what happens to the functional graph if we make a perturbation in such a PP or in the field over which it is defined? For example, the polynomial $f(X)=X(X^{q-1}-c)^{q+1},$ is a PP of $\mathbb{F}_{q^\nu}$ if and only if $c\notin\left(\mathbb{F}_{q^\nu}^\ast\right)^{q-1},$ i.e., $c$ is not a $(q-1)$-th power of an element of $\mathbb{F}_{q^\nu}^\ast$ (see Exercise 7.5 of \cite{LN}). Notice that if we consider the polynomial $f(X)=X(X^{q-1}-c)^{q+1}$ over $\mathbb{F}_{q^2}$ it can be seen as a perturbation of the identity, since for $c=0$ we have $f(X)=X^{q^2}=X.$ In the present paper, for $c \in \mathbb{F}_q^\ast$, we study the dynamics of the map $f:\mathbb{F}_{q^2}\longrightarrow\mathbb{F}_{q^2}$, where $f(X)=X(X^{q-1}-c)^{q+1}$ in order to present its functional graph $\mathcal{G}(f)$. 

Note that $c\in\left(\mathbb{F}_{q^2}^\ast\right)^{q-1} \cap \ \mathbb{F}_q^*$ is equivalent to the condition $c=\pm 1$, thus the map $f(X)=X(X^{q-1}-c)^{q+1},$ is a PP of $\mathbb{F}_{q^2}$ with $c \in \mathbb{F}_q^*$ if and only if $c\neq\pm 1$.

\section{The functional graph of $f(X)=X(X^{q-1}-c)^{q+1}$ in odd characteristic}\label{sectiongen}

In this section, we will assume that $q$ is an odd prime power.

\begin{definition}
	For every $\alpha \in \mathbb{F}_q$, let $\chi_2$ be the quadratic character over $\mathbb{F}_q$ defined as
	$$
	\chi_2(\alpha) =
	\left\{ \begin{array}{cl}
		1 & \text{if } \alpha \text{ is a square in } \mathbb{F}_q , \\
		-1 & \text{if } \alpha \text{ is not a square in } \mathbb{F}_q ,\\
		0 & \text{if } \alpha=0.
	\end{array}
	\right.
	$$
\end{definition}

Let $\{1,\beta \}$ be a basis of $\mathbb{F}_{q^2}$ over $\mathbb{F}_q$,
where $\beta^2=a \in \mathbb{F}_q^*$ and $a$ is not a square in $\mathbb{F}_q$.
Thus, every element $\alpha \in \mathbb{F}_{q^2}$
can be written as $\alpha=x+y\beta$, with $x,y\in \mathbb{F}_q$.

\begin{lemma}\label{lema1}
Let $\alpha=x+y\beta \in \mathbb{F}_{q^2}$, with $x,y \in \mathbb{F}_q$. We have
$f(\alpha)=g(x,y)  \alpha$,
where
\[
g(x,y)= \frac{(1-c)^2x^2-(1+c)^2y^2a}{x^2-y^2a}\in \mathbb{F}_q.
\]
\end{lemma}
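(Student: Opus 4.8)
The plan is to compute $f(\alpha)=\alpha(\alpha^{q-1}-c)^{q+1}$ directly in terms of the real and imaginary parts $x,y$, using that the Frobenius $\alpha\mapsto\alpha^q$ acts on $\mathbb{F}_{q^2}$ as conjugation $\overline{x+y\beta}=x-y\beta$ (since $\beta^q=-\beta$, because $\beta$ is a root of $T^2-a$ with $a$ a nonsquare in $\mathbb{F}_q$). First I would rewrite the factor $\alpha^{q-1}-c$ as $\frac{\alpha^q-c\alpha}{\alpha}=\frac{\bar\alpha-c\alpha}{\alpha}$, valid for $\alpha\neq 0$; substituting into $f$ gives
\[
f(\alpha)=\alpha\cdot\frac{(\bar\alpha-c\alpha)^{q+1}}{\alpha^{q+1}}=\frac{(\bar\alpha-c\alpha)^{q+1}}{\alpha^q}=\frac{(\bar\alpha-c\alpha)^{q+1}}{\bar\alpha}.
\]
Now the key observation is that for any $z\in\mathbb{F}_{q^2}$ one has $z^{q+1}=z\bar z=N(z)\in\mathbb{F}_q$, the norm. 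Apply this with $z=\bar\alpha-c\alpha$: its conjugate is $\alpha-c\bar\alpha$, so
\[
(\bar\alpha-c\alpha)^{q+1}=(\bar\alpha-c\alpha)(\alpha-c\bar\alpha)=(1+c^2)\alpha\bar\alpha-c(\alpha^2+\bar\alpha^2).
\]
Thus $f(\alpha)=\dfrac{(1+c^2)\alpha\bar\alpha-c(\alpha^2+\bar\alpha^2)}{\bar\alpha}$. To extract the factor $\alpha$, I would instead keep the numerator in factored form and divide one copy by $\bar\alpha$ later; more cleanly, multiply numerator and denominator by $\alpha$ to get
\[
f(\alpha)=\alpha\cdot\frac{(\bar\alpha-c\alpha)(\alpha-c\bar\alpha)}{\alpha\bar\alpha}=\alpha\cdot\frac{(\bar\alpha-c\alpha)(\alpha-c\bar\alpha)}{N(\alpha)},
\]
which already exhibits $f(\alpha)=g(x,y)\,\alpha$ with $g(x,y)=\dfrac{(\bar\alpha-c\alpha)(\alpha-c\bar\alpha)}{\alpha\bar\alpha}\in\mathbb{F}_q$, since both factors in the numerator are swapped by conjugation and hence their product is fixed, and $N(\alpha)\in\mathbb{F}_q^\ast$.

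It then remains to expand this expression in coordinates. Writing $\alpha=x+y\beta$, $\bar\alpha=x-y\beta$, one has $\alpha\bar\alpha=x^2-y^2a$, while $\bar\alpha-c\alpha=(1-c)x-(1+c)y\beta$ and $\alpha-c\bar\alpha=(1-c)x+(1+c)y\beta$; multiplying these conjugate-paired linear forms gives $(1-c)^2x^2-(1+c)^2y^2a$. Hence
\[
g(x,y)=\frac{(1-c)^2x^2-(1+c)^2y^2a}{x^2-y^2a},
\]
as claimed. Finally I would note that the hypothesis $\alpha\neq 0$ (equivalently $x^2-y^2a\neq 0$, which holds for all nonzero $\alpha$ since $a$ is a nonsquare) is needed for the division steps, and that $\alpha=0$ is a fixed point handled trivially; alternatively, clearing denominators shows the identity $f(\alpha)(x^2-y^2a)=\alpha\bigl((1-c)^2x^2-(1+c)^2y^2a\bigr)$ holds as a polynomial identity, covering $\alpha=0$ as well.

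I do not anticipate a genuine obstacle here: the only point requiring care is the bookkeeping of the Frobenius/conjugation rewriting of $\alpha^{q-1}-c$ and the verification that the resulting scalar genuinely lies in $\mathbb{F}_q$ rather than merely in $\mathbb{F}_{q^2}$ — which is immediate once the numerator is written as a product of a quantity and its conjugate, i.e. as a norm.
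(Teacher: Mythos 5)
Your proof is correct and takes essentially the same route as the paper's: both use $\beta^q=-\beta$ to realize the Frobenius as conjugation and obtain the numerator as the product of the conjugate linear forms $(1-c)x\mp(1+c)y\beta$. The only cosmetic difference is that you invoke the norm identity $z^{q+1}=z\bar z$ explicitly, whereas the paper splits the exponent as $(\cdot)^{q}\cdot(\cdot)$ and absorbs the leading factor $\alpha$ into one of the terms.
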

\begin{proof}
Observe that $(x+y\beta)^q=x-y\beta$ since $\beta^{q-1}=a^{\frac{q-1}{2}}=-1$. Therefore,
\begin{align*}
f(x+y\beta) & = (x+y\beta)\left( (x+y\beta)^{q(q-1)}-c \right) \left( (x+y\beta)^{q-1}-c \right) \\
          & = \left( (x-y\beta)^{q-1}-c \right) \left( (x-y\beta)-c(x+y\beta) \right) \\
          & = \left( \frac{((1-c)x+(1+c)y\beta)}{x-y\beta} \right) ((1-c)x-(1+c)y\beta) \\
					& = \frac{(1-c)^2x^2-(1+c)^2y^2a}{x-y\beta} = g(x,y) (x+y\beta).
\end{align*}
\end{proof}

We use this result to find the cardinality of $\f(f)$.

\begin{theorem}\label{fixed}
We have
\[
|\f (f)|=
\left\{\begin{array}{ll}
  q &  \textrm{ if } c^2=4 ,\\	 
  2q-1 & \textrm{ if } \chi_2\left(\frac{c+2}{c-2}\right)=-1, \\
	1   & \textrm{ if } \chi_2\left(\frac{c+2}{c-2}\right)=1.
\end{array}\right.
\]

\end{theorem}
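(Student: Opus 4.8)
The plan is to read the fixed-point condition straight off Lemma~\ref{lema1}. Since $f(0)=0$, the origin is always a fixed point, so it suffices to count the nonzero $\alpha=x+y\beta$ with $f(\alpha)=\alpha$. For such $\alpha$ one has $x^2-y^2a\neq 0$ (as $a$ is a non-square, $x^2=y^2a$ forces $x=y=0$), so $g(x,y)$ is well defined and $f(\alpha)=\alpha$ is equivalent to $g(x,y)=1$. Clearing the denominator in $g(x,y)=1$ and using the identities $(1-c)^2-1=c(c-2)$ and $(1+c)^2-1=c(c+2)$, the condition becomes $c(c-2)x^2=c(c+2)ay^2$, and since $c\in\mathbb{F}_q^*$ this reduces to
\[
(c-2)\,x^2=(c+2)\,a\,y^2.
\]

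First I would dispose of the case $c^2=4$. If $c=2$ the equation reads $0=4ay^2$, hence $y=0$; since $g(x,0)=(1-c)^2=1$, every element of $\mathbb{F}_q$ is then a fixed point, so $|\f(f)|=q$. Symmetrically, if $c=-2$ the equation forces $x=0$, $g(0,y)=(1+c)^2=1$, and again $|\f(f)|=q$, which settles the first line of the statement.

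Now suppose $c\neq\pm2$. If $y=0$ the displayed equation forces $x=0$, contradicting $\alpha\neq0$; hence $y\in\mathbb{F}_q^*$, and setting $t=x/y\in\mathbb{F}_q$ the condition becomes
\[
t^2=\frac{(c+2)\,a}{c-2},
\]
a well-defined nonzero element of $\mathbb{F}_q$. The number of solutions $t$ is $1+\chi_2\!\left(\frac{(c+2)a}{c-2}\right)$, and since $a$ is a non-square, $\chi_2\!\left(\frac{(c+2)a}{c-2}\right)=-\chi_2\!\left(\frac{c+2}{c-2}\right)$. Thus if $\chi_2\!\left(\frac{c+2}{c-2}\right)=1$ there is no admissible $t$, hence no nonzero fixed point, and $|\f(f)|=1$. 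If $\chi_2\!\left(\frac{c+2}{c-2}\right)=-1$ there are exactly two values $t=\pm t_0$ with $t_0\in\mathbb{F}_q^*$; each value $t$ gives the $q-1$ elements $\alpha=y(t+\beta)$ with $y\in\mathbb{F}_q^*$, and the two resulting sets meet only at the origin (if $y(t_0+\beta)=y'(-t_0+\beta)$ then $y=y'$ and $2yt_0=0$, forcing $y=0$). So they contribute $2(q-1)$ nonzero fixed points, and adding the origin gives $|\f(f)|=2q-1$.

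The argument is essentially routine once Lemma~\ref{lema1} is available; rather than a genuine obstacle, the points that need care are checking that $g$ is well defined on nonzero elements (the quantity $x^2-y^2a$ must not vanish), correctly carrying the non-squareness of $a$ through the translation into the character $\chi_2\!\left(\frac{c+2}{c-2}\right)$, and verifying in the split case that the two solution lines intersect only at the origin so that no fixed point is counted twice.
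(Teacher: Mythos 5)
Your proposal is correct and follows essentially the same route as the paper's proof: reduce via Lemma~\ref{lema1} to the equation $(c-2)x^2=(c+2)ay^2$, dispose of $c=\pm 2$ directly, and in the remaining case count solutions according to whether $\frac{a(c+2)}{c-2}$ is a square, using that $a$ is a non-square to flip the character. The only (harmless) differences are that you make explicit some checks the paper leaves implicit, such as the nonvanishing of $x^2-y^2a$ and the disjointness of the two solution lines.
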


\begin{proof}
If $\alpha=0$, then $ f(\alpha)=\alpha(\alpha^{q-1}-c)^{q+1}=0$. 
Let $\alpha =x+y\beta \neq 0$.  From the previous lemma we have
\begin{align*}
f(\alpha)=\alpha & \Longleftrightarrow \left( \frac{(1-c)^2x^2-(1+c)^2y^2a}{x^2-y^2a} \right) = 1  \\	 
	   & \Longleftrightarrow (c-2)x^2 = (c+2)y^2 a. 
\end{align*}
If $c=2$, then  $y=0$ and  $\alpha=x$, and if $c=-2$, then $x=0$ and $\alpha=y\beta$. In both cases, we have $|\f(f)|=q$. 
Now, suppose $c^2\neq 4$.  In this case, $f(\alpha)=\alpha$ is equivalent to
$$
x^2= \dfrac{a(c+2)}{c-2} y^2.
$$
We have two possibilities to consider.\\
{\bf Case 1}: $\chi_2\left(\frac{c+2}{c-2}\right)=-1.$ \\
In this case, there exists $t \in \mathbb{F}_q^*$ such that $t^2=\dfrac{a(c+2)}{c-2}$. Therefore, the solutions of $f(\alpha)=\alpha$ are of the form $\alpha=x+y\beta$ with $x=\pm ty$, and we have 
a total of $2(q-1)+1=2q-1$ fixed points, including $\alpha=0$.\\
{\bf Case 2}: $\chi_2\left(\frac{c+2}{c-2}\right)=1.$ \\
In this case, there are no fixed points except for $\alpha = 0$.
\end{proof}

The following result is a consequence of Lemma \ref{lema1}.

\begin{theorem}\label{ccz}
We have:
\begin{enumerate}
\item[(a)] The connected component that contains zero is composed only by zero directed to itself if $c\neq \pm1$.
\item[(b)] The connected component that contains zero is composed by zero directed to itself and the $q-1$ elements
of $\mathbb{F}_q^*$ directed to zero if $c=1$, or the $q-1$ elements
of $\beta\mathbb{F}_q^*$ directed to zero if
$c=-1$.  
\end{enumerate}
\end{theorem}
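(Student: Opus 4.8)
The plan is to use Lemma \ref{lema1} to control the preimages of $0$, since the connected component containing $0$ is built from $0$ together with $f^{-1}(0)$, $f^{-2}(0)$, and so on. First I would analyze $f^{-1}(0)$ directly: by Lemma \ref{lema1}, for $\alpha = x + y\beta \neq 0$ we have $f(\alpha) = g(x,y)\alpha$, so $f(\alpha) = 0$ with $\alpha \neq 0$ forces $g(x,y) = 0$, i.e. $(1-c)^2 x^2 = (1+c)^2 y^2 a$. The key observation is that $a$ is a non-square in $\mathbb{F}_q$, so this equation can only have a nonzero solution if one of the two coefficients vanishes, i.e. if $c = 1$ (forcing $x = 0$, giving $\alpha = y\beta \in \beta\mathbb{F}_q^*$) or $c = -1$ (forcing $y = 0$, giving $\alpha = x \in \mathbb{F}_q^*$). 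When $c \neq \pm 1$, both $(1-c)^2$ and $(1+c)^2$ are nonzero, and since $x^2/y^2$ would have to equal the non-square $(1+c)^2 a / (1-c)^2$, there is no nonzero solution; hence $f^{-1}(0) = \{0\}$, which immediately gives part (a).

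For part (b), I would treat $c = 1$ and $c = -1$ symmetrically. Take $c = 1$: the computation above shows $f^{-1}(0) \setminus \{0\} = \beta\mathbb{F}_q^* = \{y\beta : y \in \mathbb{F}_q^*\}$, so there are exactly $q-1$ such elements, all directed to $0$. To finish, I must show these $q-1$ elements have no further preimages outside what is already listed, i.e. that $f^{-1}(\beta\mathbb{F}_q^*)$ adds nothing new to the component. The cleanest way is to note that by Lemma \ref{lema1}, $f(\alpha) = g(x,y)\alpha$ always lies on the line $\mathbb{F}_q\alpha$ through $\alpha$ and the origin; in particular, if $\alpha = x + y\beta$ with $x \neq 0$ and $y \neq 0$, then $f(\alpha)$ is a scalar multiple of $\alpha$ and so again has both coordinates nonzero (unless $g(x,y) = 0$, the case already handled). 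Thus an element of $\beta\mathbb{F}_q^*$ (which has $x$-coordinate zero) can only be the image of $0$ or of another element of $\beta\mathbb{F}_q^*$; but $f$ restricted to $\beta\mathbb{F}_q^*$ sends $y\beta \mapsto g(0,y)\cdot y\beta = 0$ since $g(0,y) = \frac{-(1+c)^2 y^2 a}{-y^2 a} = (1+c)^2 = 4 \neq 0$... wait, that gives $f(y\beta) = 4y\beta \neq 0$ — so I must instead recompute: for $c = 1$, $g(0,y) = \frac{0 - 4y^2 a}{-y^2 a} = 4$, so actually $f(y\beta) = 4y\beta$, not $0$. Let me redo: $f(y\beta) = 0$ requires $g(0,y) = 0$, and $g(0,y) = (1+c)^2 = 4$ when $c=1$, which is nonzero, contradicting $y\beta \in f^{-1}(0)$. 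So I need to recheck the preimage computation — $g(x,y) = 0$ with $c = 1$ gives $0 = (1-c)^2 x^2 - (1+c)^2 y^2 a = -4 y^2 a$, forcing $y = 0$, i.e. $\alpha = x \in \mathbb{F}_q^*$, not $\beta\mathbb{F}_q^*$. So for $c = 1$ it is $\mathbb{F}_q^*$ that maps to $0$, and for $c = -1$ it is $\beta\mathbb{F}_q^*$; I had the two cases swapped, and the corrected statement matches the theorem.

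With the cases correctly aligned, here is the clean finish for $c = 1$: the $q-1$ elements of $\mathbb{F}_q^*$ all satisfy $f(x) = g(x,0)\cdot x = 0$, and I must show nothing else feeds into the component. An element $\gamma = u + v\beta$ with $f(\gamma) \in \mathbb{F}_q^* \cup \{0\}$ and $\gamma \notin \mathbb{F}_q^* \cup \{0\}$ would have $v \neq 0$; since $f(\gamma) = g(u,v)\gamma = g(u,v)(u + v\beta)$ has $\beta$-coordinate $g(u,v)v$, landing in $\mathbb{F}_q \cup \{0\}$ forces $g(u,v) = 0$, i.e. $\gamma \in \mathbb{F}_q^*$ by the preimage analysis — contradiction. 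Hence $f^{-1}(\mathbb{F}_q^* \cup \{0\}) = \mathbb{F}_q^* \cup \{0\}$, the component is closed, and it consists exactly of $0$ with the $q-1$ elements of $\mathbb{F}_q^*$ attached, as claimed; the case $c = -1$ is identical with the roles of the two basis coordinates interchanged. The main subtlety, as the false start above illustrates, is getting the bookkeeping right on which of $(1-c)^2$, $(1+c)^2$ vanishes in each case and correctly identifying which line ($\mathbb{F}_q$ or $\beta\mathbb{F}_q$) of solutions that produces; once that is pinned down, the non-square property of $a$ does all the real work and the rest is immediate from the multiplicative form $f(\alpha) = g(x,y)\alpha$ furnished by Lemma \ref{lema1}.
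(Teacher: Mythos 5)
Your proposal is correct (after the self-corrected swap of which of $(1-c)^2$, $(1+c)^2$ vanishes for $c=1$ versus $c=-1$) and follows essentially the same route as the paper: compute $f^{-1}(0)$ from the condition $g(x,y)=0$ using that $a$ is a non-square, then use the form $f(\alpha)=g(x,y)\alpha$ to show the $\beta$-coordinate obstruction prevents any further preimages feeding into the component. The final, corrected version matches the paper's argument step for step.
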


\begin{proof}
If $\alpha \neq 0$, from Lemma \ref{lema1} we have
\begin{align*}
f(\alpha)= 0 & \Longleftrightarrow (1-c)^2x^2-(1+c)^2y^2a =0 \\	 
	   & \Longleftrightarrow x^2(c-1)^2=ay^2(c+1)^2. 
\end{align*}
%
%
%

If $c\notin \{ 1,-1\},$ $x\neq 0$ and $y\neq 0$, then $\frac{x^2}{y^2}=a\left(\frac{c+1}{c-1}\right)^2$
has no solution, since $a$ is not a square in $\mathbb{F}_q.$ It follows that $f^{-1}(0)=\{0\}.$
If $c=1,$ then $x\in\mathbb{F}_q$ and $y=0,$ and if $c=-1,$ then $y\in\mathbb{F}_q$ and $x=0$. In both cases we have $|f^{-1}(0)|=q.$ 

Now, for $x\neq0,$ we need to prove that $f^{-1}(x+0\beta)=\emptyset$ (for the case $c=1$) and, for $y\neq0,$ $f^{-1}(0+y\beta)=\emptyset$ (for the case $c=-1$). For $c=1,$ suppose  $z+w\beta \in f^{-1}(x+0\beta)$. Thus, according to Lemma \ref{lema1}
\[
\frac{-4w^2a}{z^2-w^2a}(z+w\beta)=x+0\beta \Longleftrightarrow\ w=0 \textrm{ and } \frac{-4w^2za}{z^2-w^2a}=x \Longrightarrow x=0,
\] 
a contradiction. Therefore, $f^{-1}(x+0\beta)=\emptyset.$  Similarly, a contradiction is obtained for the case $c=-1.$
\end{proof}

In all examples of this paper we identify $\mathbb{F}_{q^2}$ with $\mathbb{F}_q^2$
via the relation $x+y\beta \longmapsto (x,y)$.

\begin{example}
Let $q=7$, $c =1$ and $\beta \in \mathbb{F}_{7^2}$ such that $\beta^2=3$.
The connected component that contains $0\in\mathbb{F}_{7^2}$ (represented by $(0,0)$ in the graph) is shown in Figure \ref{fig1}.
 \end{example}

\begin{example}
Let $q=7$, $c =-1$ and $\beta \in \mathbb{F}_{7^2}$ such that $\beta^2=3$.
Then the connected component that contains  $0 \in\mathbb{F}_{7^2}$ is shown in Figure \ref{fig2}.
\end{example}

\begin{figure}[h!]
\begin{subfigure}[h]{0.4\linewidth}
\includegraphics[scale=0.6]{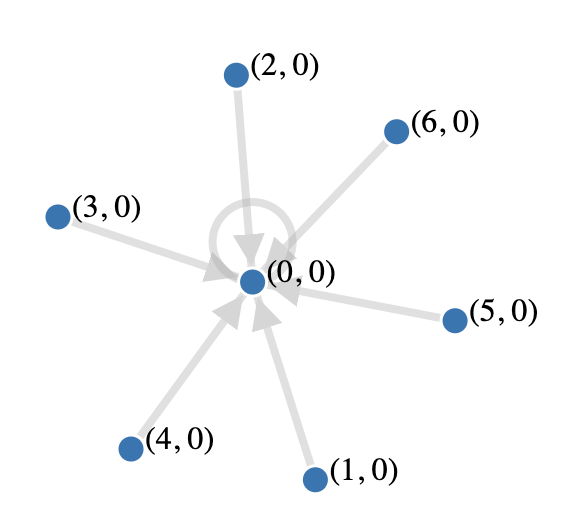}
\caption{Component of zero for $c=1$}\label{fig1}
\end{subfigure}
\hfill
\begin{subfigure}[h]{0.4\linewidth}
	\includegraphics[scale=0.6]{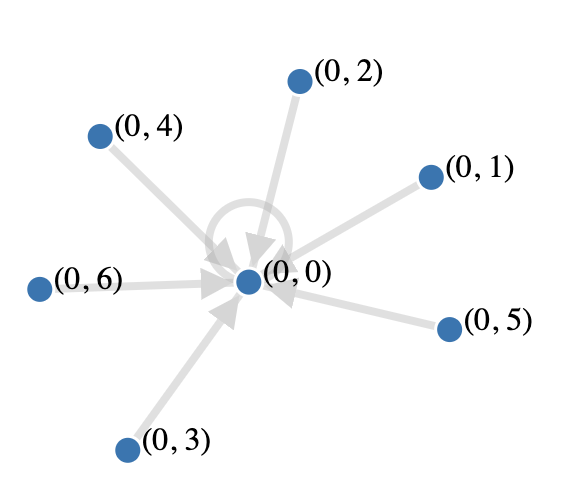}
	\caption{Component of zero for $c=-1$}\label{fig2}
\end{subfigure}
\label{fig:boat1}
\end{figure}

Note that the functional graphs for $c= \pm 1$ in the last example are equivalents, considering the map $(x,y) \to (y,x)$. This happens in the general case as we can see in the following observation. 

\begin{observation}
If we define $f_c(x+y\beta):=f(x+y \beta)$ as a function of $c$, then we have $f_c(x+y \beta)=g(y,x)(x+y\beta)$ (from Lemma \ref{lema1}) and $f_{-c}(y+x \beta^{-1})=g(x,y) (y+x \beta^{-1})$. Thus, there is an isomorphism between the \textit{functional graphs} related to the non-zero values $c$ and $-c$.  
\end{observation}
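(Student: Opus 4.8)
The plan is to produce an explicit bijection $\psi$ of $\mathbb{F}_{q^2}$ that conjugates $f_c$ to $f_{-c}$; once that is in hand, the asserted isomorphism of functional graphs follows for purely formal reasons. The candidate suggested by the two displayed identities is the $\mathbb{F}_q$-linear map $\psi\colon\mathbb{F}_{q^2}\to\mathbb{F}_{q^2}$ given by $\psi(x+y\beta)=y+x\beta^{-1}$. Since $\beta^{-1}=\beta/a$, under the identification $x+y\beta\leftrightarrow(x,y)$ this is $(x,y)\mapsto(y,x/a)$, a linear map of nonzero determinant, hence a bijection; equivalently, $\psi$ is just multiplication by $\beta^{-1}$, because $\beta^{-1}(x+y\beta)=y+x\beta^{-1}$.

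First I would apply Lemma~\ref{lema1} twice. For the parameter $c$ it gives $f_c(x+y\beta)=g_c(x,y)\,(x+y\beta)$ with $g_c(x,y)=\dfrac{(1-c)^2x^2-(1+c)^2y^2a}{x^2-y^2a}\in\mathbb{F}_q$ (this is the first displayed identity, with the multiplier written as in Lemma~\ref{lema1}). Applying the same lemma with $-c$ in place of $c$ to the element $y+x\beta^{-1}=y+(x/a)\beta$ gives $f_{-c}(y+x\beta^{-1})=g_{-c}\!\left(y,\tfrac{x}{a}\right)(y+x\beta^{-1})$. The one genuine computation of the proof is then to check, by multiplying numerator and denominator of $g_{-c}(y,x/a)$ by $a$ and using $1\mp(-c)=1\pm c$, that $g_{-c}\!\left(y,\tfrac{x}{a}\right)=g_c(x,y)$; this is exactly the second displayed identity. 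I expect this small identity to be the only place where care is needed — essentially one must see that the scalar multipliers for $f_c$ and $f_{-c}$ coincide.

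With both identities in hand the conclusion drops out. Since $g_c(x,y)\in\mathbb{F}_q$ and $\psi$ is $\mathbb{F}_q$-linear,
\[
\psi\bigl(f_c(x+y\beta)\bigr)=g_c(x,y)\,\psi(x+y\beta)=g_c(x,y)\,(y+x\beta^{-1})=f_{-c}\bigl(\psi(x+y\beta)\bigr),
\]
so $\psi\circ f_c=f_{-c}\circ\psi$. Hence $\psi$ sends each edge $\langle\alpha,f_c(\alpha)\rangle$ of $\mathcal{G}(f_c)$ to the edge $\langle\psi(\alpha),f_{-c}(\psi(\alpha))\rangle$ of $\mathcal{G}(f_{-c})$, and since $\psi$ is a bijection on vertices (with $\psi^{-1}$ intertwining the maps the other way) it is an isomorphism $\mathcal{G}(f_c)\cong\mathcal{G}(f_{-c})$. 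Two remarks close the argument. The only exceptional vertex, $\alpha=0$, causes no trouble: it is a fixed point of both $f_c$ and $f_{-c}$ and is fixed by $\psi$, while for $\alpha\neq 0$ the denominator $x^2-y^2a$ is nonzero because $a$ is a non-square, so Lemma~\ref{lema1} applies throughout. And one can bypass Lemma~\ref{lema1} entirely: in odd characteristic $q+1$ is even and $\beta^{q-1}=a^{(q-1)/2}=-1$, so $f_c(\beta X)=\beta X(-X^{q-1}-c)^{q+1}=\beta X(X^{q-1}+c)^{q+1}=\beta f_{-c}(X)$, which exhibits the conjugation by $X\mapsto\beta X$ directly (and since $f_c$ commutes with multiplication by any element of $\mathbb{F}_q^\ast$, conjugating by $\beta$ or by $\beta^{-1}$ comes to the same thing).
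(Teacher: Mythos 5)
Your proof is correct and follows essentially the same route as the paper: the two displayed identities of the Observation are exactly the statement that multiplication by $\beta^{-1}$ (i.e.\ $x+y\beta\mapsto y+x\beta^{-1}$) intertwines $f_c$ and $f_{-c}$, and you simply carry out the verification $g_{-c}(y,x/a)=g_c(x,y)$ that the paper leaves implicit, plus the formal conjugation argument. Note only that you (correctly) read the first identity as $f_c(x+y\beta)=g(x,y)(x+y\beta)$, consistent with Lemma~\ref{lema1}, whereas the Observation as printed writes $g(y,x)$ there, which appears to be a typo; your extra direct computation $f_c(\beta X)=\beta f_{-c}(X)$ is a nice independent confirmation.
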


%
%

From Lemma \ref{lema1}, we can see that the graph $\mathcal{G}(f)$ can be partitioned into smaller graphs along each
``line'' passing through the origin of $\mathbb{F}_{q^2}$ when, as mentioned before, we identify $\mathbb{F}_{q^2}$ with $\mathbb{F}_{q}^2$ via the relation $x+y\beta \mapsto (x,y)$. Thus, we denote by $\mathbb{P}^1$ the space of lines in $\mathbb{F}_{q^2}$ passing through the origin, that is, each element of $\mathbb{P}^1$ is of the form
\[
L_{[u:v]} :=
 \{  \lambda (u+ v  \beta) \in \mathbb{F}_{q^2} \mid \lambda \in \mathbb{F}_q   \}. 
\]
The definitions of $\mathbb{P}^1$ and $L_{[u:v]}$ that we just made in odd characteristic will also remain valid for even characteristic.

Observe that if we exclude $0\in \mathbb{F}_{q^2}$, we find that $\mathbb{F}_{q^2}^*$ is partitioned into $q+1$ lines of the form $L_{[u:v]} \backslash \{0\}$.
We will prove that each of these lines is divided into cycles of the same size, except perhaps for one of them 
(see Theorem \ref{ccz}).

\begin{theorem}\label{teo1-v2}
For $L_{[u:v]}  \in \mathbb{P}^1$ we have $g(u,v)=0$  if and only if $c=1$ and $v=0$, or
$c=-1$ and $u=0$. We also have that 
for each $L_{[u:v]}  \in \mathbb{P}^1$ such that $g(u,v)\neq 0$, the set
$L_{[u:v]}\backslash \{ 0\}$ forms $\frac{q-1}{d}$ cycles of length $d$ in the graph $\mathcal{G}(f)$,
where $d$ is the multiplicative order of $g(u,v)$ in $\mathbb{F}_q^*$.
\end{theorem}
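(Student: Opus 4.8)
The plan is to work line by line. Fix $L_{[u:v]} \in \mathbb{P}^1$ and a generator $u+v\beta$ of the line, and observe from Lemma~\ref{lema1} that for any $\lambda \in \mathbb{F}_q^*$ one has $f(\lambda(u+v\beta)) = g(\lambda u, \lambda v)\,\lambda(u+v\beta)$. Since $g$ is a ratio of homogeneous quadratics in $(x,y)$, it is homogeneous of degree $0$, so $g(\lambda u,\lambda v) = g(u,v)$ for every $\lambda \in \mathbb{F}_q^*$ (and the denominator $x^2 - y^2 a$ never vanishes on $\mathbb{F}_{q^2}^*$ because $a$ is a non-square, so $g$ is well defined on the whole punctured line). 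Therefore $f$ restricted to $L_{[u:v]}\setminus\{0\}$ is exactly multiplication by the constant $\gamma := g(u,v) \in \mathbb{F}_q$; writing points of the line as $\lambda(u+v\beta)$, the map is $\lambda \mapsto \gamma\lambda$ on $\mathbb{F}_q^*$.

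First I would settle the $g(u,v)=0$ characterization: from the formula $g(u,v) = \big((1-c)^2u^2 - (1+c)^2 v^2 a\big)/(u^2 - v^2 a)$, the numerator vanishes iff $(1-c)^2 u^2 = (1+c)^2 a v^2$. If $u,v$ are both nonzero this forces $a = \big((1-c)/(1+c)\big)^2 (u/v)^2$ to be a square in $\mathbb{F}_q$, contradicting the choice of $a$ (one must also note $c\neq \pm1$ keeps both coefficients nonzero; the cases $c=\pm1$ are where the numerator degenerates); hence $g(u,v)=0$ forces $u=0$ or $v=0$. If $v=0$ the numerator is $(1-c)^2 u^2$, which vanishes iff $c=1$; if $u=0$ it is $-(1+c)^2 v^2 a$, vanishing iff $c=-1$. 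This gives exactly the two stated cases, and these are precisely the exceptional lines of Theorem~\ref{ccz}.

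Now assume $g(u,v)\neq 0$ and set $d = \ord(\gamma)$ in $\mathbb{F}_q^*$. On $\mathbb{F}_q^*$ the map $\mu_\gamma : \lambda \mapsto \gamma\lambda$ is a bijection, and $\mu_\gamma^{(k)}(\lambda) = \gamma^k \lambda$, so $\mu_\gamma^{(k)}(\lambda)=\lambda$ iff $\gamma^k = 1$ iff $d \mid k$; thus every orbit has length exactly $d$, and since $\mathbb{F}_q^*$ is a disjoint union of these orbits, there are $|\mathbb{F}_q^*|/d = (q-1)/d$ of them. Translating back through the bijection $\lambda \mapsto \lambda(u+v\beta)$ between $\mathbb{F}_q^*$ and $L_{[u:v]}\setminus\{0\}$ (which conjugates $f|_{L_{[u:v]}\setminus\{0\}}$ to $\mu_\gamma$), we get that $L_{[u:v]}\setminus\{0\}$ is a disjoint union of $(q-1)/d$ cycles of length $d$ in $\mathcal{G}(f)$, as claimed.

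The only genuinely delicate point is the degree-$0$ homogeneity step — making sure $g$ is truly constant along each line and that its denominator never vanishes; once that is in place the argument is just the elementary orbit structure of multiplication by a fixed element of $\mathbb{F}_q^*$. I would present the homogeneity observation carefully and relegate the rest to the standard cyclic-group computation.
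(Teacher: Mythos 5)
Your proof is correct and follows essentially the same route as the paper: both rest on the degree-$0$ homogeneity of $g$, which makes $f$ act as multiplication by the constant $g(u,v)$ on each punctured line, and then on the standard orbit count for multiplication by an element of order $d$ in $\mathbb{F}_q^*$. The only cosmetic difference is that you verify the $g(u,v)=0$ characterization directly from the formula for $g$, whereas the paper simply cites Theorem~\ref{ccz}, whose proof is the same computation.
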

\begin{proof}
The  first part of the statement follows from
Theorem \ref{ccz}.
	From Lemma \ref{lema1}, we have $f(x+y\beta)=g(x,y)(x+y\beta)$. Note that $g(\lambda x, \lambda y)=g(x,y)$ for all 
	$\lambda \in \mathbb{F}_q^*$, thus $g(x,y)$ is constant in each set $L_{[u:v]} \backslash \{ 0 \}$ and
\begin{equation}\label{cycle1}
	f^{(n)}(x+y\beta)= g(u,v)^n (x+y\beta), 
\end{equation}
for all $n \ge 0$ and
all $x+y\beta \in L_{[u:v]} \backslash \{ 0 \}$. Equation \eqref{cycle1} implies that
if $g(u,v) \neq 0$, then each 
set $L_{[u:v]} \backslash \{ 0 \}$ is partitioned into $\frac{q-1}{d}$ cycles of length $d$
 in the graph $\mathcal{G}(f)$.
\end{proof}

\begin{remark}\label{remAB}
The polynomial studied in this paper can be obtained from the map $x \to x^n h\left(x^{\frac{q^2-1}{m}}\right)$ in \cite{AB} considering $n=1,$ $h(x)=(x-c)^{q+1}$ and $m=q+1$. For example,  Theorem \ref{ccz} can be seen as a particular case of  \cite[Theorem 2.4]{AB}.
Also, the authors presented results about the connected components that do not contain zero  in \cite[Theorem 2.7]{AB},
but we observe that this result is not applicable when $n=1.$ Therefore,
Theorem \ref{principalodd} is not a consequence of  \cite[Theorem 2.7]{AB}.
This remark is independent of the parity of $q$.
\end{remark}

\begin{example}\label{example1}
Let $q=11$, $c =1$, and $\beta \in \mathbb{F}_{11^2}$ such that $\beta^2=7$.
The functional graph of $f(X)=X(X^{10}-1)^{12}$ is 
shown in Figure \ref{fig5}.

 In Table \ref{table1} we show the number of cycles of the graph
	$\mathcal{G}(f)$ in each set $L_{[u:v]} \backslash \{ 0\}$, where $L_{[u:v]} \neq L_{[1:0]}$.
{\small
	\begin{table}[h]
		\centering
		\begin{tabular}{ccc}
			$[u:v]$ & $d={\ord }( g(u,v) )$ & Number of cycles of length $d$ \\
			\hline
            $[1: 1],[10: 1]$ & $1$   & $10+10=20$ \\
            $[0:1],[4: 1],[7: 1]$ &   $5$    & $2+2+2=6$ \\
            $[2: 1],[3: 1],[5: 1],[6: 1],[8: 1],[9: 1]$ & $10$ & $1+1+1+1+1+1=6$
		\end{tabular}
		\caption{Cycles over $\mathbb{F}_{11^2}$}
		\label{table1}
	\end{table}
}
\end{example}

The following result summarizes Theorem \ref{ccz} and Theorem \ref{teo1-v2}.
Observe that if $c\neq \pm 1$,
the connected component of zero 
	is isomorphic to the initial term
	$\mathcal{C}_1$, and if $c=1$ or $c=-1$,
	the connected component of zero 
	is isomorphic to the initial term
	$(\mathcal{C}_1,\mathcal{T}_{q-1})$.

\begin{theorem}\label{principalodd}
Let $q$ be a power of an odd prime. The functional graph of the function
$f(X)=X(X^{q-1}-c)^{q+1}$ over $\mathbb{F}_{q^2}$, with $c \in \mathbb{F}_q\backslash \{0,1,-1 \}$, is isomorphic to
\[
\mathcal{G} = \mathcal{C}_1 \oplus \bigoplus_{L_{[u:v]}\in \mathbb{P}^1} \frac{q-1}{{\ord} (g(u,v))} \times  \mathcal{C}_{{\ord}(g(u,v))}.
\]
If $c=\pm 1$, the functional graph is isomorphic to
\[
\mathcal{G} = (\mathcal{C}_1,\mathcal{T}_{q-1})  \oplus 
\bigoplus_{L_{[u:v]}\in \mathbb{P}^1 \backslash \{L_0 \}} 
\frac{q-1}{{\ord} (g(u,v))} \times \mathcal{C}_{{\ord}(g(u,v))},
\]
where $L_0=L_{[1:0]}$ if $c=1$, and $L_0=L_{[0:1]}$ if $c=-1$.
\end{theorem}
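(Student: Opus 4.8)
The plan is to assemble Theorem \ref{principalodd} directly from the two structural results already proved, namely Theorem \ref{ccz} (description of the connected component of zero) and Theorem \ref{teo1-v2} (each punctured line $L_{[u:v]}\setminus\{0\}$ breaks into $\frac{q-1}{d}$ cycles of length $d$, with $d=\ord(g(u,v))$). First I would recall, as observed in Lemma \ref{lema1}, that $f$ maps every line $L_{[u:v]}$ to itself, so the vertex set $\mathbb{F}_{q^2}$ decomposes as the disjoint union of $\{0\}$ together with the $q+1$ punctured lines $L_{[u:v]}\setminus\{0\}$, and every edge $\langle x,f(x)\rangle$ stays inside the block containing $x$. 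Hence $\mathcal{G}(f)$ is the disjoint union of the induced subgraph on $\{0\}$ (which is $\mathcal{C}_1$, a single loop, since $f(0)=0$) and the induced subgraphs on each punctured line. This is where the blank-line-free reasoning lives; I will present it as a short paragraph rather than a display.

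Next I would split according to the value of $c$. When $c\notin\{0,1,-1\}$, Theorem \ref{teo1-v2} applies to every line (since $g(u,v)=0$ forces $c=\pm1$), so each of the $q+1$ punctured lines contributes $\frac{q-1}{\ord(g(u,v))}\times\mathcal{C}_{\ord(g(u,v))}$, and combining with the loop at zero gives exactly the first displayed formula. When $c=1$ (the case $c=-1$ being identical after exchanging the roles of $u$ and $v$, as noted in the Observation, or just by rerunning the argument with $L_0=L_{[0:1]}$), Theorem \ref{ccz}(b) tells us that the line $L_0=L_{[1:0]}$ merges with zero: zero has a loop and the $q-1$ nonzero points of $L_0$ all map to zero, with no further preimages by the last paragraph of the proof of Theorem \ref{ccz}. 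That is precisely the graph $(\mathcal{C}_1,\mathcal{T}_{q-1})$ in the notation of item (3) of the introduction (the cycle $\mathcal{C}_1$ with each of its — here, one — vertices blown up into a copy of $\mathcal{T}_{q-1}$, i.e.\ $q-1$ leaves feeding into the loop vertex). Every other line $L_{[u:v]}\in\mathbb{P}^1\setminus\{L_0\}$ has $g(u,v)\neq0$ by Theorem \ref{teo1-v2}, hence contributes $\frac{q-1}{\ord(g(u,v))}\times\mathcal{C}_{\ord(g(u,v))}$ exactly as before, and the disjoint union over these lines together with the component of zero yields the second displayed formula.

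I expect no serious obstacle: the theorem is essentially a bookkeeping corollary. The only points demanding care are (i) verifying that the component of zero, in the $c=\pm1$ case, really is $(\mathcal{C}_1,\mathcal{T}_{q-1})$ and not something larger — this needs the assertion, already established at the end of the proof of Theorem \ref{ccz}, that points of $L_0\setminus\{0\}$ have empty preimage, so the tree has depth one; and (ii) making sure the indexing set of the second direct sum correctly excludes exactly the one exceptional line $L_0$, so that no line is double-counted and the zero component is not counted twice. I would close by remarking that, in view of Remark \ref{remAB}, this statement is genuinely new rather than a specialization of \cite[Theorem 2.7]{AB}, since that result is vacuous when $n=1$.
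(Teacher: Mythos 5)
Your proposal is correct and matches the paper's treatment: the paper presents Theorem \ref{principalodd} explicitly as a summary of Theorem \ref{ccz} and Theorem \ref{teo1-v2}, with the component of zero identified as $\mathcal{C}_1$ or $(\mathcal{C}_1,\mathcal{T}_{q-1})$ exactly as you describe. Your added care about the depth-one tree (empty preimages of points in $L_0\setminus\{0\}$) and the exclusion of $L_0$ from the second sum is precisely the bookkeeping the paper relies on.
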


\begin{figure}[h!]
	\includegraphics[scale=0.9]{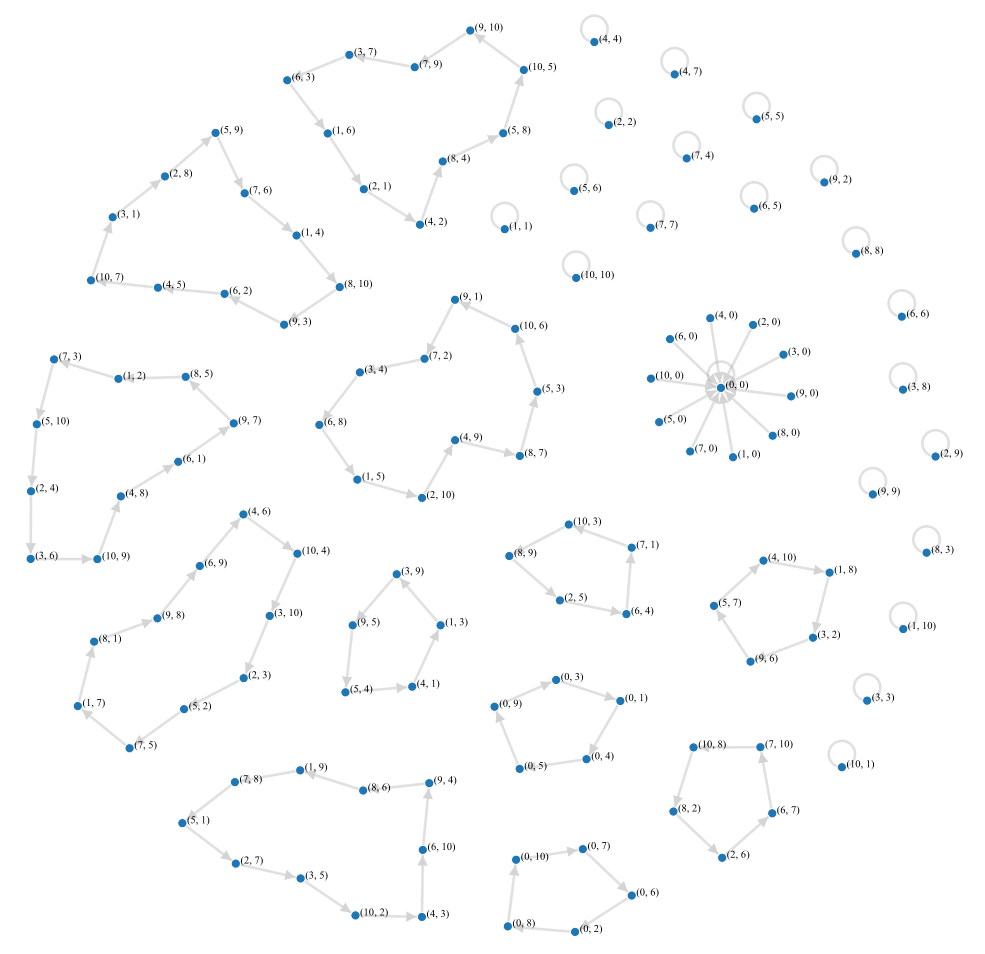}
	\caption{The functional graph of $f(X)=X(X^{10}-1)^{12}$ over $\mathbb{F}_{11^2}$}\label{fig5}
\end{figure}

\section{The functional graph of $f(X)=X(X^{q-1}-c)^{q+1}$
	in even characteristic}\label{sectioneven}

In this section, we will assume that $q$ is an even prime power. 
From \cite[Proposition 5.9]{livro}, 
for a given $a \in \mathbb{F}_q$,
the polynomial $h(x)=x^2+x+a \in \mathbb{F}_q[x]$  is irreducible
if and only if $\operatorname{Tr}(a) =1$, where the trace function $\operatorname{Tr}$ is defined from $\mathbb{F}_q$ to the prime field $\mathbb{F}_2$.
Thus, in this section, we will also assume that $a \in \mathbb{F}_q$ with  $\operatorname{Tr}(a) =1$,
and $\beta \in \mathbb{F}_{q^2}$ with $\beta^2+\beta +a=0$. As in the previous section,
the set $\{1,\beta\}$ forms a basis for $\mathbb{F}_{q^2}$ over $\mathbb{F}_q$.
	Thus, every element $\alpha \in \mathbb{F}_{q^2}$
	can be written as $\alpha=x+y\beta$, with $x,y\in \mathbb{F}_q$.

\begin{lemma}\label{l2}
Let $\alpha=x+y\beta \in \mathbb{F}_{q^2}^\ast$, with $x,y \in \mathbb{F}_q$. We have
$f(\alpha)=g(x,y)  \alpha$,
where
\[
g(x,y)= c^2+1+\frac{cy^2}{x^2+xy+y^2a}\in \mathbb{F}_q.
\]
\end{lemma}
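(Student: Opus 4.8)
The argument is the characteristic-two analogue of the proof of Lemma~\ref{lema1}; the two structural changes are that now $\beta^2=\beta+a$ instead of $\beta^2=a$, and that $\mathbb{F}_q$ has characteristic $2$. First I would record the action of the Frobenius. Since $x^2+x+a$ is irreducible with roots $\beta$ and $\beta+1$, we have $\beta^q=\beta+1$, hence
\[
\alpha^q=(x+y\beta)^q=x+y(\beta+1)=(x+y)+y\beta .
\]
Because $c\in\mathbb{F}_q$ we have $c^q=c$, so
\[
(\alpha^{q-1}-c)^{q+1}=(\alpha^{q-1}-c)\,(\alpha^{q-1}-c)^{q}=(\alpha^{q-1}-c)\,(\alpha^{q(q-1)}-c),
\]
and therefore $f(\alpha)=\alpha\,(\alpha^{q-1}-c)(\alpha^{q(q-1)}-c)$.

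Next, using $\alpha^{q^2}=\alpha$ (valid since $\alpha\neq 0$) I would write $\alpha^{q-1}=\alpha^q/\alpha$ and $\alpha^{q(q-1)}=\alpha/\alpha^q$, so that
\[
\alpha^{q-1}-c=\frac{\alpha^q-c\alpha}{\alpha},\qquad \alpha^{q(q-1)}-c=\frac{\alpha-c\alpha^q}{\alpha^q}.
\]
Multiplying these and cancelling one factor of $\alpha$ gives
\[
f(\alpha)=\frac{(\alpha^q-c\alpha)(\alpha-c\alpha^q)}{\alpha^q}.
\]
Substituting $\alpha=x+y\beta$ and $\alpha^q=(x+y)+y\beta$, each of $\alpha^q-c\alpha$ and $\alpha-c\alpha^q$ is an explicit $\mathbb{F}_q$-linear combination of $1$ and $\beta$, and I would expand their product using $\beta^2=\beta+a$.

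The computational core --- and the step most prone to slips --- is this expansion. Writing the product as $P+Q\beta$ with $P,Q\in\mathbb{F}_q$, one finds $Q=2(1-c)^2(xy+y^2)$, which vanishes in characteristic $2$, so the product equals the element $P=(1-c)^2(x^2+xy+ay^2)-cy^2$ of $\mathbb{F}_q$. Dividing by $\alpha$, the remaining denominator is $\alpha\alpha^q=x^2+xy+ay^2$, which is nonzero since $\alpha\neq 0$; hence $f(\alpha)=g(x,y)\alpha$ with
\[
g(x,y)=\frac{(1-c)^2(x^2+xy+ay^2)-cy^2}{x^2+xy+ay^2}.
\]
Finally, using $(1-c)^2=1+c^2$ and $-c=c$ in characteristic $2$ and splitting off the constant part of the fraction yields $g(x,y)=c^2+1+\dfrac{cy^2}{x^2+xy+ay^2}\in\mathbb{F}_q$, as claimed.
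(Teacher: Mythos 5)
Your proof is correct and takes essentially the same route as the paper's: both use $\beta^q=\beta+1$ to rewrite $f(\alpha)$ as a rational expression in $\alpha$ and $\alpha^q$ and then reduce the denominator to the norm $\alpha\alpha^q=x^2+xy+ay^2$. The only difference is organizational: the paper expands the product of the two factors as $c^2+1+c\bigl(\alpha/\alpha^q+\alpha^q/\alpha\bigr)$ and uses $\alpha^2+\alpha^{2q}=(\alpha+\alpha^q)^2=y^2$, which sidesteps your coordinate expansion of the numerator, but the computation is the same in substance and your expansion (including the vanishing of the $\beta$-coefficient in characteristic $2$) checks out.
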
\label{l5}
\begin{proof}

Since
 $X^2+X+a=(X+\beta)(X+\beta^q)\in \mathbb{F}_{q^2}[X]$, we have $\beta^q=\beta+1$. Thus,
\begin{align*}
f(x+y\beta) & = (x+y\beta) \left( (x+y\beta)^{q^2-q}+c \right) \left( (x+y\beta)^{q-1}+c \right) \\
            & = (x+y\beta) \left( \frac{x+y\beta}{x+y+y\beta}+c \right) \left( \frac{x+y+y\beta}{x+y\beta}+c \right) \\
            & = (x+y\beta) \left( c^2+1+c\left(\frac{x+y\beta}{x+y+y\beta} + \frac{x+y+y\beta}{x+y\beta}\right) \right) \\
            & = \left( c^2+1+\frac{cy^2}{x^2+xy+y^2a} \right) (x+y\beta).
\end{align*}
\end{proof}

\begin{theorem}
\label{fixed2}
We have
\[
|\f (f)|=
\left\{\begin{array}{cl}
   2q-1 & \textrm{if } \operatorname{Tr}(1/c)=1, \\
	1   & \textrm{if } \operatorname{Tr}(1/c)=0.
\end{array}\right.
\]
\end{theorem}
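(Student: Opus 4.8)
The plan is to mimic the proof of Theorem \ref{fixed} from the odd characteristic case, now using the formula for $g(x,y)$ from Lemma \ref{l2}. First I would dispose of $\alpha=0$, which is always a fixed point since $f(0)=0$. Then for $\alpha=x+y\beta\neq 0$, Lemma \ref{l2} gives $f(\alpha)=\alpha$ if and only if $g(x,y)=1$, that is,
\[
c^2+1+\frac{cy^2}{x^2+xy+y^2a}=1 \Longleftrightarrow c^2 (x^2+xy+y^2a)=cy^2,
\]
and since $c\in\mathbb{F}_q^*$ we may divide by $c$ to obtain $c(x^2+xy+y^2a)=y^2$, i.e. $cx^2+cxy+(ca+1)y^2=0$.

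Next I would count solutions $(x,y)\in\mathbb{F}_q^2\setminus\{(0,0)\}$ of this homogeneous quadratic. If $y=0$ then $cx^2=0$ forces $x=0$, so any nonzero solution has $y\neq 0$; dividing by $y^2$ and setting $t=x/y$ reduces the problem to counting roots $t\in\mathbb{F}_q$ of $ct^2+ct+(ca+1)=0$, equivalently $t^2+t+\bigl(a+\tfrac1c\bigr)=0$. Each root $t$ yields $q-1$ nonzero solutions $(x,y)=(ty,y)$, $y\in\mathbb{F}_q^*$. By the standard criterion in characteristic $2$ (the same \cite[Proposition 5.9]{livro} already invoked, in its additive-character form: $x^2+x+b$ has two roots in $\mathbb{F}_q$ if $\operatorname{Tr}(b)=0$ and none if $\operatorname{Tr}(b)=1$), the quadratic $t^2+t+(a+\tfrac1c)$ has two roots in $\mathbb{F}_q$ precisely when $\operatorname{Tr}(a+\tfrac1c)=0$, and none when $\operatorname{Tr}(a+\tfrac1c)=1$. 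Since $\operatorname{Tr}$ is additive and $\operatorname{Tr}(a)=1$ by our standing assumption, $\operatorname{Tr}(a+\tfrac1c)=1+\operatorname{Tr}(1/c)$, so there are two roots exactly when $\operatorname{Tr}(1/c)=1$ and none when $\operatorname{Tr}(1/c)=0$.

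Putting this together: when $\operatorname{Tr}(1/c)=1$ there are $2$ values of $t$, each contributing $q-1$ nonzero fixed points, giving $2(q-1)+1=2q-1$ fixed points including $0$; when $\operatorname{Tr}(1/c)=0$ the only fixed point is $0$, so $|\f(f)|=1$. This matches the stated formula. I do not expect a genuine obstacle here — the computation is a direct even-characteristic analogue of Theorem \ref{fixed} — but the one point requiring care is the correct translation of the irreducibility/root-counting criterion for $x^2+x+b$ into the trace condition, and making sure the denominator $x^2+xy+y^2a$ in $g(x,y)$ is nonzero on $L_{[u:v]}\setminus\{0\}$ (it is, since $X^2+X+a$ is irreducible, hence $x^2+xy+y^2a=0$ with $(x,y)\neq(0,0)$ is impossible), so that the manipulation clearing denominators is legitimate.
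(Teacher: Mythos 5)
Your proposal is correct and follows essentially the same route as the paper's proof: both reduce $g(x,y)=1$ to the Artin--Schreier equation $t^2+t+(a+1/c)=0$ in $t=x/y$ (after ruling out $y=0$), apply the trace criterion for $x^2+x+b$, and count $2(q-1)+1$ or $1$ fixed points accordingly. The only cosmetic difference is that you first clear denominators into a homogeneous quadratic before dehomogenizing, while the paper divides by $y^2$ immediately; your remark that $x^2+xy+ay^2\neq 0$ for $(x,y)\neq(0,0)$ is a worthwhile extra check.
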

\begin{proof}
Let $x+y\beta \in \f (f)$. Since $0 \in \f (f)$, we consider $x+y\beta\neq0.$ According to Lemma \ref{l2}, it is enough to analyze $g(x,y)=1$. If $y=0$, from Lemma \ref{l2} we have $c=0$, a contradiction. If $y \neq 0$, we write $\lambda =x/y$, and $g(x,y)=1$ is equivalent to  $c^2+1+\frac{c}{\lambda^2+\lambda+a}=1$ or $\lambda^2+\lambda+a+\frac{1}{c}=0.$
By \cite[Proposition 5.9]{livro}, the polynomial
$X^2+X+a+\frac{1}{c} \in \mathbb{F}_q[X]$ is irreducible if and only if $\operatorname{Tr}(a+1/c)=1$, or equivalently $\operatorname{Tr}(1/c)=0$. Therefore, if $\operatorname{Tr}(1/c)=0$, then $0\in \mathbb{F}_{q^2}$ is the only fixed point
of $f$. 

Consider now $\operatorname{Tr}(1/c)=1$.
In this case, there exist $\lambda_1,\lambda_2\in \mathbb{F}_q$ such that
$X^2+X+a+\frac{1}{c}=(X+\lambda_1)(X+\lambda_2)$.  
Therefore, all the fixed points of $f$ different from $0$ are of the form
$y(\lambda_i+\beta)$, where
 $i\in \{1,2\}$ and $y\in \mathbb{F}_q^*$. 
 Since $\lambda_1+ \lambda_2=1$, then
 $\lambda_1 \neq \lambda_2$. Thus,
the $q-1$ points $y(\lambda_1+\beta)$  are different from the points $y(\lambda_2+\beta)$.
Hence, we have a total of $2(q-1)+1=2q-1$ fixed points.
\end{proof}

\begin{example}
Let $q=2^3$. Consider $a \in \mathbb{F}_8$ such that $a^3+a^2+1=0$ and $c =a$. Since $\operatorname{Tr}(a)=1$,
we have that $\{1,\beta\}$ is a basis of $\mathbb{F}_{8^2}$ over $\mathbb{F}_{8}$, where
$\beta^2+\beta +a=0$. The functional graph of $f(X)=X(X^7-a)^9$ is shown in Figure \ref{fig9}.
\end{example}

\begin{figure}[h!]
	\includegraphics[scale=0.8]{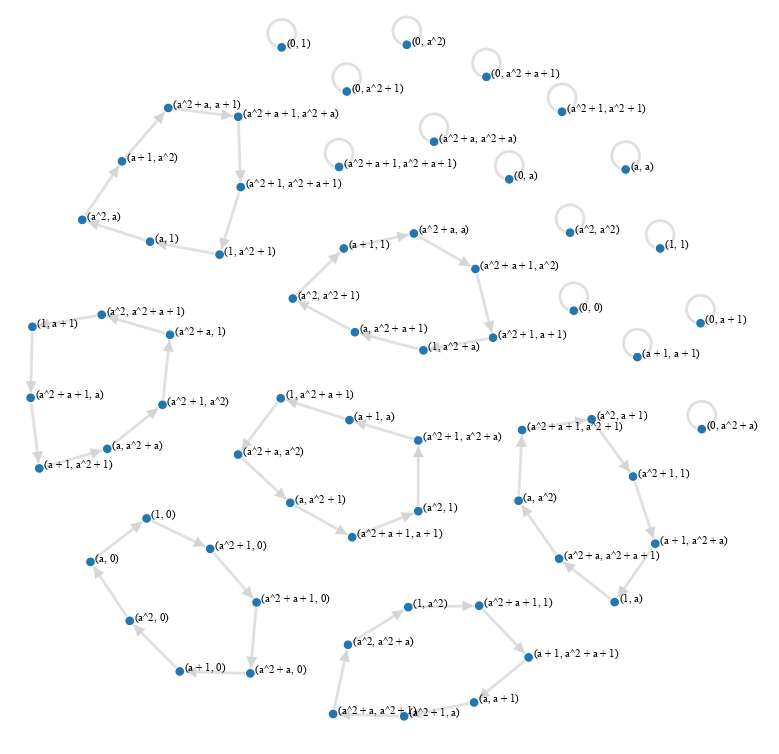}
	\caption{The functional graph of $f(X)=X(X^7-a)^9$ over $\mathbb{F}_{8^2}$}\label{fig9}
\end{figure}


\begin{theorem}\label{ccz2}
We have:
\begin{enumerate}
\item[(a)] The connected component that contains zero is composed only by zero directed to itself if $c\neq1$. 
\item[(b)] The connected component that contains zero is composed by zero directed to itself and the $q-1$ elements
of $\mathbb{F}_q^*$ directed to zero if $c=1$.
\end{enumerate}
\end{theorem}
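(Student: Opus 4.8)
The plan is to follow the proof of Theorem~\ref{ccz} almost verbatim, using Lemma~\ref{l2} in place of Lemma~\ref{lema1}. Since $f(0)=0$ trivially, the whole statement reduces to computing $f^{-1}(0)$ and, in the case $c=1$, checking that the resulting in-edges do not continue into longer chains. By Lemma~\ref{l2}, for $\alpha=x+y\beta\neq 0$ one has $f(\alpha)=0$ exactly when $g(x,y)=0$, that is,
\[
c^{2}+1+\frac{cy^{2}}{x^{2}+xy+y^{2}a}=0 .
\]
I would then split into the two cases of the statement.

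For \textbf{Case (a)}, with $c\neq 1$, the aim is to prove $f^{-1}(0)=\{0\}$; since then no vertex other than $0$ can reach $0$, the component of zero is the single looped vertex $\mathcal{C}_{1}$. If $y=0$ (so $x\neq 0$) the displayed equation becomes $c^{2}+1=(c+1)^{2}=0$, which is impossible for $c\neq 1$. If $y\neq 0$, I would set $\lambda=x/y$ and divide numerator and denominator by $y^{2}$; using $c\neq 0$ and $c^{2}+1=(c+1)^{2}\neq 0$, the equation becomes equivalent to
\[
\lambda^{2}+\lambda+\left(a+\frac{c}{c^{2}+1}\right)=0 .
\]
The crucial point is the identity $\frac{c}{c^{2}+1}=\frac{c}{(c+1)^{2}}=z^{2}+z$ with $z=\frac{1}{c+1}\in\mathbb{F}_{q}$, so that $\operatorname{Tr}\!\left(\frac{c}{c^{2}+1}\right)=\operatorname{Tr}(z^{2})+\operatorname{Tr}(z)=0$ and hence $\operatorname{Tr}\!\left(a+\frac{c}{c^{2}+1}\right)=\operatorname{Tr}(a)=1$. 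By \cite[Proposition 5.9]{livro} the quadratic above then has no root in $\mathbb{F}_{q}$, so there is no nonzero $\alpha$ with $f(\alpha)=0$, which proves (a).

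For \textbf{Case (b)}, with $c=1$, we have $c^{2}+1=0$, so Lemma~\ref{l2} collapses to $g(x,y)=\frac{y^{2}}{x^{2}+xy+y^{2}a}$, whence $f(x+y\beta)=0$ if and only if $y=0$; thus $f^{-1}(0)=\mathbb{F}_{q}$, so $0$ together with the $q-1$ elements of $\mathbb{F}_{q}^{*}$ map to $0$. To finish I still need that nothing maps onto a point of $\mathbb{F}_{q}^{*}$, and here I would reuse the last paragraph of the proof of Theorem~\ref{ccz}: if $z+w\beta\in f^{-1}(x+0\beta)$ with $x\neq 0$, then equating the $\beta$-coordinates in $f(z+w\beta)=\frac{w^{2}}{z^{2}+zw+w^{2}a}(z+w\beta)=x$ forces $w=0$, and then $f(z+0\beta)=g(z,0)(z+0\beta)=0\neq x$, a contradiction. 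Hence the component of zero is exactly $0$ with a loop together with the $q-1$ vertices of $\mathbb{F}_{q}^{*}$ pointing to it, i.e. $(\mathcal{C}_{1},\mathcal{T}_{q-1})$.

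The only step that is not a mechanical translation of the odd-characteristic argument is the trace computation in Case (a): without $\operatorname{Tr}(c/(c^{2}+1))=0$ the quadratic $\lambda^{2}+\lambda+a+c/(c^{2}+1)$ might acquire a root in $\mathbb{F}_{q}$, and then $0$ would have nonzero preimages, so I expect verifying that identity to be the main (and essentially only) obstacle.
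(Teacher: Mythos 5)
Your proposal is correct and follows essentially the same route as the paper: reduce to $g(x,y)=0$ via Lemma~\ref{l2}, pass to $\lambda=x/y$ and the quadratic $\lambda^{2}+\lambda+a+c/(c^{2}+1)$, and show its trace is $1$ so it has no root when $c\neq 1$. Your derivation of $\operatorname{Tr}\bigl(c/(c^{2}+1)\bigr)=0$ via $c/(c+1)^{2}=z^{2}+z$ with $z=1/(c+1)$ is just a rephrasing of the paper's factorization $X^{2}+X+\tfrac{c}{c^{2}+1}=(X+\tfrac{1}{c+1})(X+\tfrac{c}{c+1})$, and your direct preimage computation in case (b) is an equally valid substitute for the paper's line-invariance argument.
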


\begin{proof}
If $c=1,$ then $f(x+y\beta)=0$ is equivalent to $y=0$. From Lemma \ref{l2}, 
	for every point $x+y\beta \in L_{[u:v]}  \backslash \{ 0\}$,  we have $f(x+y\beta) \in L_{[u:v]}  \backslash \{ 0\}$,
	for all sets $L_{[u:v]} \in \mathbb{P}^1$ with $L_{[u:v]} \neq L_{[1:0]}$.
	Therefore, the connected component that contains zero is composed by zero directed to itself and the $q-1$ elements 
	of $\mathbb{F}_q^*$ directed to zero.

Now, suppose $c\neq1$. If $y=0$, then $f(x)=(c^2+1)x$.
Thus, no point of $\mathbb{F}_q^*$ belongs to the connected component that contains zero.
If $y\neq 0$,  
consider $\lambda=\frac{x}{y} \in \mathbb{F}_q$. According to Lemma \ref{l2}, it is enough to analyze $c^2+1+\frac{c}{\lambda^2+\lambda+a}=0$. The last equation is equivalent to $\lambda^2+\lambda+a+\frac{c}{c^2+1}=0.$ By \cite[Proposition 5.9]{livro}, the polynomial 
$X^2+X+a+\frac{c}{c^2+1} \in \mathbb{F}_q[X]$
is irreducible if and only if $\operatorname{Tr}(a+c/(c^2+1))=1.$
Since $X^2+X+\frac{c}{c^2+1}= (X+\frac{1}{c+1})(X+\frac{c}{c+1})$, it  follows that $\operatorname{Tr}(c/(c^2+1))=0.$ Hence, $\operatorname{Tr}(a+c/(c^2+1))=\operatorname{Tr}(a)=1.$ Therefore, $X^2+X+a+\frac{c}{c^2+1} \in \mathbb{F}_q[X]$ is irreducible over $\mathbb{F}_q$ and the connected component that contains zero is composed only by zero directed to itself. 
\end{proof}

\begin{example}
Let $q=2^3$ and $c =1$. Consider $a \in \mathbb{F}_8$ such that $a^3+a^2+1=0$. Since $\operatorname{Tr}(a)=1$,
we have that $\{1,\beta\}$ is a basis of $\mathbb{F}_{8^2}$ over $\mathbb{F}_{8}$, where
$\beta^2+\beta +a=0$. The connected component that contains $0\in\mathbb{F}_{8^2}$ is shown in Figure \ref{fig6}.
\end{example}

\begin{figure}[h!]
	\includegraphics[scale=0.5]{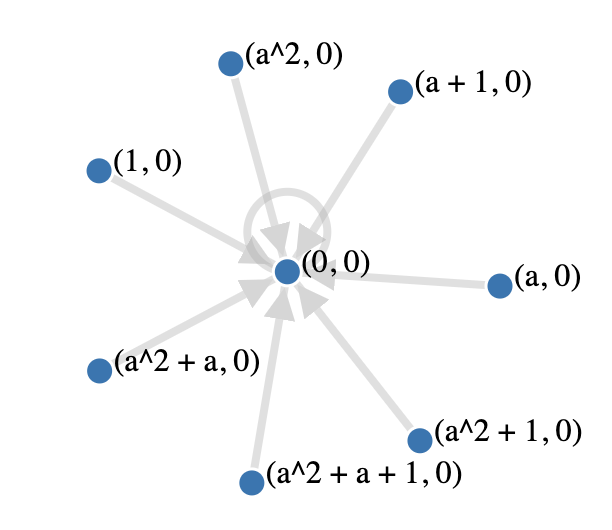}
	\caption{Component of zero for $c=1$ and $q=2^3$}\label{fig6}
\end{figure}

\begin{theorem}\label{t3-v2}
For $L_{[u:v]}  \in \mathbb{P}^1$ we have  $g(u,v) = 0$ if and only if $c=1$ and $v=0$. We also have that 
	for each $L_{[u:v]}  \in \mathbb{P}^1$ such that $g(u,v)\neq 0$, the set
	$L_{[u:v]}\backslash \{ 0\}$ forms $\frac{q-1}{d}$ cycles of length $d$ in the graph $\mathcal{G}(f)$,
	where $d$ is the multiplicative order of $g(u,v)$ in $\mathbb{F}_q^*$.
\end{theorem}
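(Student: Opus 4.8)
The plan is to follow the exact template used for Theorem~\ref{teo1-v2} in the odd characteristic case, now relying on Lemma~\ref{l2} instead of Lemma~\ref{lema1}. First I would dispatch the ``if and only if'' part. From Lemma~\ref{l2} we have $g(u,v)=c^2+1+\frac{cv^2}{u^2+uv+v^2a}$, and $g(u,v)=0$ is equivalent to $(c^2+1)(u^2+uv+v^2a)=cv^2$. If $v=0$ this reduces to $(c^2+1)u^2=0$, i.e. $c^2+1=0$, which in characteristic $2$ means $c=1$; conversely $c=1,v=0$ gives $g(u,v)=0$. If $v\neq 0$, set $\lambda=u/v$ and the equation becomes $\lambda^2+\lambda+a+\frac{c}{c^2+1}=0$ (valid since $c\neq 0$ and, if $c=1$, the coefficient $\frac{c}{c^2+1}$ is undefined, so we may assume $c\neq 1$ here); by the same argument given in the proof of Theorem~\ref{ccz2}(b)---namely $\operatorname{Tr}\!\big(a+\frac{c}{c^2+1}\big)=\operatorname{Tr}(a)=1$ because $X^2+X+\frac{c}{c^2+1}=(X+\frac{1}{c+1})(X+\frac{c}{c+1})$ splits---the polynomial $X^2+X+a+\frac{c}{c^2+1}$ is irreducible over $\mathbb{F}_q$, so there is no such $\lambda$. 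Hence $g(u,v)=0$ forces $c=1$ and $v=0$.

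Next I would prove the cycle-structure statement. By Lemma~\ref{l2}, $f(x+y\beta)=g(x,y)(x+y\beta)$, and one checks directly from the formula that $g(\lambda x,\lambda y)=g(x,y)$ for every $\lambda\in\mathbb{F}_q^*$, so $g$ is constant on each punctured line $L_{[u:v]}\setminus\{0\}$, with common value $g(u,v)$. Iterating gives $f^{(n)}(x+y\beta)=g(u,v)^n(x+y\beta)$ for all $n\ge 0$ and all $x+y\beta\in L_{[u:v]}\setminus\{0\}$, exactly as in \eqref{cycle1}. Now assume $g(u,v)\neq 0$ and let $d=\ord(g(u,v))$ in $\mathbb{F}_q^*$. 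Then $f^{(n)}(x+y\beta)=x+y\beta$ if and only if $g(u,v)^n=1$ if and only if $d\mid n$, because $x+y\beta\neq 0$. This shows every element of $L_{[u:v]}\setminus\{0\}$ is periodic with period exactly $d$, so $L_{[u:v]}\setminus\{0\}$ decomposes into cycles all of length $d$; since $|L_{[u:v]}\setminus\{0\}|=q-1$, there are $\frac{q-1}{d}$ of them (and $d\mid q-1$ automatically since $g(u,v)\in\mathbb{F}_q^*$).

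Since the first sentence of the proof can simply cite Theorem~\ref{ccz2} for the ``if and only if'' classification (mirroring how Theorem~\ref{teo1-v2} cited Theorem~\ref{ccz}), the only genuine content is the bijection between orbits and the counting. I do not anticipate a real obstacle here: the argument is a verbatim adaptation of the odd-characteristic proof, the multiplicativity $f^{(n)}=g(u,v)^n\cdot\mathrm{id}$ on each line makes the cycle lengths transparent, and the irreducibility input needed for the ``only if'' direction is already established inside the proof of Theorem~\ref{ccz2}. The one point requiring a little care is making sure the case $c=1$ is handled separately when dividing by $c^2+1$, but that case is precisely the one singled out in the statement, so it causes no trouble.
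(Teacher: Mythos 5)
Your proof is correct and follows essentially the same route as the paper: the paper likewise derives the cycle structure from the identity $f^{(n)}(x+y\beta)=g(u,v)^n(x+y\beta)$ on each punctured line and simply cites Theorem~\ref{ccz2} for the vanishing classification. The only difference is that you re-derive the ``only if'' direction explicitly (which is fine, though your dismissal of the case $c=1$, $v\neq 0$ would be cleaner by noting that the equation $(c^2+1)(\lambda^2+\lambda+a)=c$ becomes $0=1$ there), whereas the paper leaves that entirely to the earlier theorem.
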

\begin{proof}
The  first part of the statement follows from Theorem \ref{ccz2}.
From Lemma \ref{l2}, we have $f(x+y\beta)=g(x,y)(x+y\beta)$. Note that $g(\lambda x, \lambda y)=g(x,y)$ for all 
$\lambda \in \mathbb{F}_q^*$, thus $g(x,y)$ is constant in each set $L_{[u:v]} \backslash \{ 0 \}$ and
\begin{equation}\label{cycle}
f^{(n)}(x+y\beta)= g(u,v)^n (x+y\beta), 
\end{equation}
for all $n \ge 0$ and
all $x+y\beta \in L_{[u:v]} \backslash \{ 0 \}$. Equation \eqref{cycle} implies that
if $g(u,v) \neq 0$, then each 
set $L_{[u:v]} \backslash \{ 0 \}$ is partitioned into $\frac{q-1}{d}$ cycles of length $d$
in the graph $\mathcal{G}(f)$.
\end{proof}

The following result summarizes Theorem \ref{ccz2} and Theorem \ref{t3-v2}.

\begin{theorem}\label{principaleven}
Let $q$ be an even prime power. The functional graph of the function
$f(X)=X(X^{q-1}-c)^{q+1}$ over $\mathbb{F}_{q^2}$, with $c \in \mathbb{F}_q\backslash \{0,1 \}$, is isomorphic to
\[
		\mathcal{G} = \mathcal{C}_1 \oplus \bigoplus_{L_{[u:v]}\in \mathbb{P}^1} \frac{q-1}{{\ord} (g(u,v))} \times  \mathcal{C}_{{\ord}(g(u,v))}.
\]
If $c=1$, the functional graph is isomorphic to
\[
	\mathcal{G} = (\mathcal{C}_1,\mathcal{T}_{q-1})  \oplus 
		\bigoplus_{L_{[u:v]}\in \mathbb{P}^1 \backslash \{L_{[1:0]} \}} 
		\frac{q-1}{{\ord} (g(u,v))} \times \mathcal{C}_{{\ord}(g(u,v))}.
		\]
	\end{theorem}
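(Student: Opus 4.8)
The statement is a clean bookkeeping consequence of the two theorems it claims to summarize, so the proof is essentially an assembly argument. First I would recall that, identifying $\mathbb{F}_{q^2}$ with $\mathbb{F}_q^2$ via $x+y\beta \mapsto (x,y)$, the set $\mathbb{F}_{q^2}^*$ decomposes as the disjoint union of the $q+1$ punctured lines $L_{[u:v]}\setminus\{0\}$ over $L_{[u:v]}\in\mathbb{P}^1$, and that by Lemma \ref{l2} the map $f$ preserves each such line (since $f(x+y\beta)=g(x,y)(x+y\beta)$ and $g$ is constant on $L_{[u:v]}\setminus\{0\}$). Hence the functional graph $\mathcal{G}(f)$ is the disjoint union of the induced subgraph on $\{0\}$ together with its in-neighbours, and the induced subgraphs on the punctured lines that are not attached to $0$. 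This is the skeleton into which the two cases plug.

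\textbf{Case $c\neq 1$.} Here Theorem \ref{ccz2}(a) gives that the connected component of $0$ is exactly $\mathcal{C}_1$ (zero with a loop, no other pre-images), so $f^{-1}(0)=\{0\}$ and every punctured line $L_{[u:v]}\setminus\{0\}$ is disjoint from the component of $0$. By Theorem \ref{t3-v2}, for $c\neq 1$ we have $g(u,v)\neq 0$ for every $L_{[u:v]}\in\mathbb{P}^1$, and each $L_{[u:v]}\setminus\{0\}$ splits into $\frac{q-1}{\ord(g(u,v))}$ cycles of length $\ord(g(u,v))$, i.e. a subgraph isomorphic to $\frac{q-1}{\ord(g(u,v))}\times\mathcal{C}_{\ord(g(u,v))}$. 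Taking the disjoint union over the one component of zero and all $q+1$ lines yields precisely
\[
\mathcal{G}(f) \;\cong\; \mathcal{C}_1 \;\oplus\; \bigoplus_{L_{[u:v]}\in \mathbb{P}^1} \frac{q-1}{\ord(g(u,v))} \times \mathcal{C}_{\ord(g(u,v))}.
\]

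\textbf{Case $c=1$.} Now Theorem \ref{ccz2}(b) says the component of $0$ consists of $0$ with a loop together with the $q-1$ points of $\mathbb{F}_q^* = L_{[1:0]}\setminus\{0\}$ all mapping to $0$; this is by definition the graph $(\mathcal{C}_1,\mathcal{T}_{q-1})$ (replace the single vertex of $\mathcal{C}_1$ by the tree $\mathcal{T}_{q-1}$ whose $q-1$ leaves are the elements of $\mathbb{F}_q^*$). Moreover Theorem \ref{t3-v2} identifies $L_{[1:0]}$ as the unique line with $g(u,v)=0$, so it is absorbed into the component of zero and must be removed from the remaining union; for every other $L_{[u:v]}\in\mathbb{P}^1\setminus\{L_{[1:0]}\}$ we have $g(u,v)\neq 0$ and the same cycle decomposition as before applies. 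Assembling the pieces gives
\[
\mathcal{G}(f) \;\cong\; (\mathcal{C}_1,\mathcal{T}_{q-1}) \;\oplus\; \bigoplus_{L_{[u:v]}\in \mathbb{P}^1 \setminus \{L_{[1:0]}\}} \frac{q-1}{\ord(g(u,v))} \times \mathcal{C}_{\ord(g(u,v))},
\]
which completes the proof.

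There is essentially no obstacle here beyond being careful about two points: that the $\frac{q-1}{\ord(g(u,v))}$ appearing in the formula is an integer (guaranteed since $g(u,v)\in\mathbb{F}_q^*$, whose order divides $q-1$), and that in the $c=1$ case the line $L_{[1:0]}$ really does coincide with $\mathbb{F}_q^*\cup\{0\}$ and is the only line fused with the zero component — both of which are exactly the content of Theorems \ref{ccz2} and \ref{t3-v2}. So the "hard part," such as it is, is purely notational: checking that the graph-theoretic shorthand $(\mathcal{C}_1,\mathcal{T}_{q-1})$ and $k\times\mathcal{H}$ from the introduction matches the explicit descriptions of the components, and that nothing is double-counted when forming the disjoint union over $\mathbb{P}^1$.
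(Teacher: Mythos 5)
Your proposal is correct and follows exactly the route the paper intends: the paper gives no separate proof of this theorem, presenting it only as a summary of Theorems \ref{ccz2} and \ref{t3-v2}, and your assembly of those two results (zero component plus line-by-line cycle decomposition, with $L_{[1:0]}$ absorbed into the zero component when $c=1$) is precisely that argument made explicit.
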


\begin{example}
 Let $q=2^4$ and $c =1$. Consider $a \in \mathbb{F}_{16}$ such that $a^4+a^3+a^2+a+1=0$. Since $\operatorname{Tr}(a)=1$,
we have that $\{1,\beta\}$ is a basis of $\mathbb{F}_{16^2}$ over $\mathbb{F}_{16}$, where
$\beta^2+\beta +a=0$. 
The functional graph of $f(X)=X(X^{15}+1)^{17}$ is 
shown in Figure \ref{fig8}.

In Table \ref{table2} we show the number of cycles of the graph
	$\mathcal{G}(f)$ in each set $L_{[u:v]} \backslash \{ 0\}$, where $L_{[u:v]} \neq L_{[1:0]}$.	
	
	{\small
		\begin{table}[h]
			\centering
			\begin{tabular}{ccc}
				$[u:v]$ & $d={\ord }( g(u,v) )$ & Number of cycles of length $d$ \\
				\hline				\hline
$[0: 1], [1:1], [a: 1], [a + 1:1]$ 
                       &  \multirow{ 3}{*}{$5$}         &   \multirow{ 3}{*}{$3+3 +\cdots + 3 = 24$} \\
$[a^2 + a: 1], [a^2 + a + 1: 1]$& & \\
$[a^3: 1], [a^3 + 1: 1]$ & & \\ \hline
$[a^2: 1], [a^2 + 1: 1]$ 
 &  \multirow{ 5}{*}{$15$}         &   \multirow{ 5}{*}{$1+1 +\cdots + 1 = 8$} \\
$[a^3 + a: 1], [a^3 + a + 1: 1]$ & & \\
$[a^3 + a^2: 1], [a^3 + a^2 + 1: 1]$& & \\
$[a^3 + a^2 + a: 1]$ & & \\
$[a^3 + a^2 + a + 1: 1]$ & & \\
 \hline
			\end{tabular}
			\caption{Cycles over $\mathbb{F}_{16^2}$}
			\label{table2}
		\end{table}
	}
From Theorem \ref{fixed2}, it follows that the only fixed point is $0 \in \mathbb{F}_{16^2}$, since
$\operatorname{Tr}(1)=0$.

\end{example}

\begin{figure}[h!]
	\includegraphics[scale=0.8]{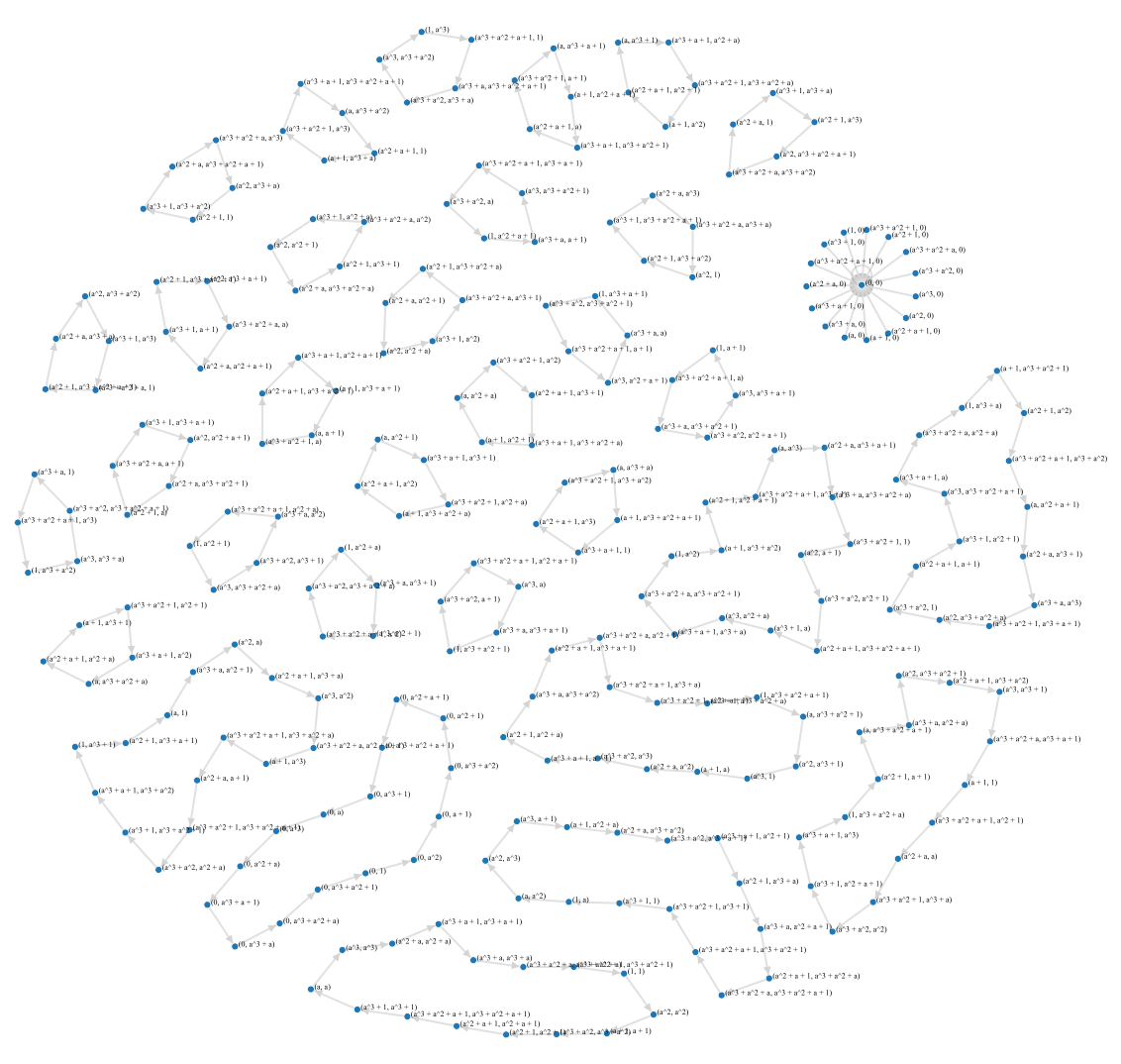}
	\caption{The functional graph of $f(X)=X(X^{15}+1)^{17}$}\label{fig8}
\end{figure}





\section{Existence of maximal cycles}
From Theorem \ref{principalodd} and Theorem \ref{principaleven}, we see that the functional graph of
$f(X)=X(X^{q-1}-c)^{q+1}$ can have cycles of length $d$, where $d$ is a divisor of $q-1$. 
One question that can be asked is for which values of $q$ and $c$, there exist cycles of maximum length $q-1$. 
Observe that this is equivalent to prove that there exists $L_{[u:v]}\in \mathbb{P}^1$ such that
$\ord (g(u,v)) = q-1$. For the odd case, the orders of 
$g(1,0)=(1-c)^2$ and
$g(0,1)=(1+c)^2$ 
depend on $c$ and are at most $\frac{q-1}{2}$. Thus, we can assume that 
$u\neq 0$ and $v\neq 0$.
If we write $\lambda = u/v$, the problem reduces to prove that there exists $\lambda\in \mathbb{F}_q^*$
such that
\[
g(\lambda,1) =\frac{(1-c)^2\lambda^2-(1+c)^2a}{\lambda^2-a}
\]
is primitive.

In the even case, $g(1,0)=1+c^2= (1+c)^2$, 
thus the primitivity of $g(1,0)$ depends on the value of $c$.
In order to solve the odd and even cases simultaneously, we will find all prime powers $q$ for which, for any value of 
$c\in \mathbb{F}_q^*$, there exist $u,v\in \mathbb{F}_q$ with $v \neq 0$ such that $g(u,v)$ is primitive.
Thus, in the even case we also assume that $v \neq 0$, and writing $\lambda = u/v$, the problem reduces to proving that there exists $\lambda\in \mathbb{F}_q$
such that
\[
g(\lambda,1) = c^2+1+\frac{c}{\lambda^2+\lambda+a}
\]
is primitive.

There are some works  in the literature (see \cite{CGNT} and \cite{CSS}) which deal with primitive pairs $(\lambda,g(\lambda))$, where $g$ is a rational functions with coefficients in $\mathbb{F}_q$.
In order to state the results
of this section, we need to recall some definitions.
We say that a rational function
$g \in \mathbb{F}_q(x)$ is {\it not exceptional} if it is not of the form $g(x) = c x^j h^d(x)$, where
$j\in \mathbb{Z}$, $d>1$ divides $q-1$, $c \in \mathbb{F}_q^*$ and $h(x)  \in  \mathbb{F}_q(x)$ (see \cite[Introduction]{CSS}).
For a positive integer $m$, let $W(m)$ denote the number of positive divisors of $m$.

Henceforth, $g(x)$ denotes the polynomial
\[
g(x) =\left\{
\begin{array}{rl}
\frac{(1-c)^2x^2-(1+c)^2a}{x^2-a} \in \mathbb{F}_q(x)        & \textrm{if } q \textrm{ is odd,}\\
c^2+1+\frac{c}{x^2+x+a}                   \in \mathbb{F}_q(x)         &\textrm{if } q \textrm{ is even.}
\end{array}
\right.
\]
In particular, we will use the following results.
\begin{proposition}\label{primitive}
	Let $q$ be a prime power. If $q^{\frac{1}{2}}\ge 3 W(q-1)$, then there exists
	an element $\lambda \in \mathbb{F}_q^*$ such that $g(\lambda)$ is primitive.
\end{proposition}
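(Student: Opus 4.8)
The plan is to deduce Proposition~\ref{primitive} from the existing literature on primitive values of rational functions, specifically the character-sum estimates in \cite{CSS} (and the related work \cite{CGNT}). First I would verify that for every admissible $c\in\mathbb{F}_q^*$ the rational function $g(x)$ is \emph{not exceptional} in the sense recalled above, i.e.\ that it cannot be written as $c_0 x^j h^d(x)$ with $d>1$ dividing $q-1$. This is the structural hypothesis under which the known results apply. In the odd case $g(x)=\frac{(1-c)^2x^2-(1+c)^2a}{x^2-a}$ has numerator and denominator of degree $2$ with distinct roots (the roots of the denominator are $\pm\sqrt a$, which do not lie in $\mathbb{F}_q$ since $a$ is a non-square, while the roots of the numerator are $\pm\sqrt a\,\frac{1+c}{1-c}$ when $c\neq\pm1$, or $g$ is constant when $c=\pm1$ — a degenerate case to handle separately or exclude); a short factorization argument shows $g$ has no $d$-th power factorization for $d>1$. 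In the even case $g(x)=c^2+1+\frac{c}{x^2+x+a}=\frac{(c^2+1)(x^2+x+a)+c}{x^2+x+a}$ has an irreducible quadratic denominator $x^2+x+a$ (irreducible because $\operatorname{Tr}(a)=1$) and a numerator that is not a scalar multiple of it (since $c\neq0$), so again $g$ is not exceptional.

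Next I would invoke the quantitative criterion from \cite{CSS}: there is a character-sum bound guaranteeing the existence of $\lambda\in\mathbb{F}_q^*$ with $g(\lambda)$ primitive whenever a condition of the shape $q^{1/2} > C\,W(q-1)$ holds, where $C$ is an explicit constant depending only on the degrees (and the number of distinct zeros and poles) of $g$. Since $g$ here has numerator and denominator of degree at most $2$, these combinatorial parameters are bounded by absolute constants, and one checks that the resulting threshold is at most $3W(q-1)$; that is precisely the hypothesis $q^{1/2}\ge 3W(q-1)$ in the statement. The main step is therefore a careful bookkeeping of the constants in the cited estimate applied to a degree-$2$ rational function, together with the verification that the sieve/inclusion–exclusion over prime divisors $p\mid q-1$ closes under the stated inequality.

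The step I expect to be the genuine obstacle is making the constant $C=3$ come out exactly, rather than a larger absolute constant. The raw Weil-type bound for the relevant character sums will typically produce a constant that is too large, and one usually needs the standard refinement — a sieve over the prime divisors of $q-1$ (the ``prime sieve'' of Cohen–Huczynska type), in which one removes a few small primes and absorbs their contribution into a $\delta$-factor — to bring the threshold down to $3W(q-1)$. Organizing that sieve and tracking how the degrees of the numerator and denominator of $g$ feed into it is the technical heart of the argument; everything else (non-exceptionality, the degenerate cases $c=\pm1$ in odd characteristic where $g$ is constant, and the reduction $\lambda=u/v$) is routine. I would present this as: (i) reduce to showing $g(\lambda)$ primitive for some $\lambda$; (ii) check $g$ is not exceptional; (iii) apply the sieved character-sum estimate of \cite{CSS} with the degree data of $g$ substituted in; (iv) observe the resulting numerical condition is implied by $q^{1/2}\ge 3W(q-1)$.
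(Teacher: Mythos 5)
Your proposal follows essentially the same route as the paper: verify that $g$ is not exceptional (the paper does this simply by noting its denominator is irreducible) and then rework the character-sum/sieve argument of \cite[Theorem 3.1]{CSS}, counting only the elements $\lambda$ with $g(\lambda)$ primitive rather than primitive pairs, to land on the threshold $q^{1/2}\ge 3W(q-1)$. One small slip: in odd characteristic $g$ is not constant at $c=\pm1$ (it equals $-4a/(x^2-a)$ and $4x^2/(x^2-a)$, respectively), but these are still non-exceptional, so no separate treatment is needed.
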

\begin{proof}
	Observe first that $g(x)$ is not an exceptional function, since its denominator is irreducible.
	Now, if we follow the proof of \cite[Theorem 3.1]{CSS} in detail, but count only the number of elements
	$\lambda \in \mathbb{F}_q$ such that $g(\lambda)$ is primitive, we find that this number is positive if
	$q^{\frac{1}{2}} \ge 3W(q-1)$.
\end{proof}

\begin{proposition}\label{primitive2}
	Let $t>2$ be a positive real number, $q$ a prime power and
	$$
	A_{t}=\prod_{\substack{ s \textrm{ prime} \\ s < 2^t}}
	\frac{2}{\sqrt[t]{s}}.
	$$
	If $q \ge \left(  3 \cdot A_t  \right)^{\frac{2t}{t-2}}$, then there exists $\lambda \in \mathbb{F}_q$ such that $g(\lambda)$ is primitive.
\end{proposition}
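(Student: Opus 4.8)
The plan is to deduce this immediately from Proposition \ref{primitive} together with the classical upper bound for the divisor function
\[
W(m)\;<\;A_t\,m^{1/t}\qquad\text{for every positive integer }m,
\]
so that the only non-bookkeeping step is the divisor estimate itself. Recall that Proposition \ref{primitive} guarantees a primitive value $g(\lambda)$ with $\lambda\in\mathbb{F}_q^{*}$ as soon as $q^{1/2}\ge 3W(q-1)$; hence it is enough to check that the hypothesis $q\ge (3A_t)^{2t/(t-2)}$ forces the inequality $q^{1/2}\ge 3W(q-1)$.

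For the divisor bound I would argue as in the standard references \cite{CGNT,CSS}. Writing $m=\prod_{p} p^{a_p}$, one has
\[
\frac{W(m)}{m^{1/t}}=\prod_{p\mid m}\frac{a_p+1}{p^{a_p/t}},
\]
and the factor attached to a prime $p\ge 2^{t}$ is at most $\tfrac{a_p+1}{2^{a_p}}\le 1$, whereas the factors attached to the primes $p<2^{t}$ are controlled by $\tfrac{2}{p^{1/t}}$; multiplying over all primes $p<2^{t}$ (the primes not dividing $m$ only contribute further factors $\ge 1$) gives $W(m)/m^{1/t}<A_t$. This is the step carrying the actual arithmetic content, and the one I would treat with the most care; everything after it is elementary.

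Granting the estimate, put $m=q-1$. Then
\[
3W(q-1)\;<\;3A_t\,(q-1)^{1/t}\;<\;3A_t\,q^{1/t},
\]
so it suffices to have $3A_t\,q^{1/t}\le q^{1/2}$, that is, $q^{\,1/2-1/t}\ge 3A_t$. Since $t>2$ we have $\tfrac12-\tfrac1t=\tfrac{t-2}{2t}>0$, and raising to the positive power $\tfrac{2t}{t-2}$ shows this is equivalent to $q\ge (3A_t)^{2t/(t-2)}$, which is precisely the hypothesis. Hence $q^{1/2}\ge 3W(q-1)$, and Proposition \ref{primitive} yields the required $\lambda\in\mathbb{F}_q$ with $g(\lambda)$ primitive. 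The main obstacle is the divisor estimate $W(m)<A_t m^{1/t}$; the reduction to Proposition \ref{primitive} and the manipulation of exponents are routine.
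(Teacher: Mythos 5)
Your reduction to Proposition \ref{primitive} --- bounding $3W(q-1)$ by $3A_t\,q^{1/t}$ and unwinding the exponent $q^{1/2-1/t}\ge 3A_t$ to recover the hypothesis $q\ge(3A_t)^{2t/(t-2)}$ --- is exactly the paper's argument; the only difference is that the paper takes the inequality $W(q-1)<A_t\,q^{1/t}$ directly from the proof of \cite[Proposition 3.6]{CGNT} instead of proving it. The genuine gap is in your sketch of that inequality, which you yourself flag as the step carrying the arithmetic content. For a prime $p<2^t$ with $p^{a_p}\,\|\,m$, the local factor $\frac{a_p+1}{p^{a_p/t}}$ is \emph{not} bounded by $\frac{2}{p^{1/t}}$ once $a_p\ge 2$: for $p=2$, $a_p=2$ the claim reads $3\cdot 2^{-2/t}\le 2\cdot 2^{-1/t}$, i.e.\ $2^{1/t}\ge 3/2$, which fails for every $t>2$. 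The defect is not cosmetic: with $W$ the full divisor-counting function (as this paper defines it), the asserted bound $W(m)<A_t\,m^{1/t}$ is itself false for some $m$ --- for $t=4.08$ (the value used later in Theorem \ref{lastresult}) and $m=27000=2^3\cdot 3^3\cdot 5^3$ one gets $W(m)=64$ while $A_t\,m^{1/t}\approx 62.4$.

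What the per-prime argument does prove, and what the cited references actually supply, is the same bound for the number of \emph{squarefree} divisors $2^{\omega(m)}$, whose local factor is $2/p^{a_p/t}\le 2/p^{1/t}$ for $p\mid m$ and $\le 1$ for $p\ge 2^t$; there the computation closes exactly as you describe. So the repair is either to cite the bound as the paper does, or to replace $W$ by $2^{\omega}$ throughout and observe that the sieve criterion behind Proposition \ref{primitive} only needs $q^{1/2}\ge 3\cdot 2^{\omega(q-1)}$, a condition weaker than $q^{1/2}\ge 3W(q-1)$. Everything after the divisor estimate in your write-up --- the passage from $(q-1)^{1/t}$ to $q^{1/t}$ and the manipulation of exponents using $t>2$ --- is correct and coincides with the paper.
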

\begin{proof}
From the proof of \cite[Proposition 3.6]{CGNT} we have $W(q-1) < A_t \cdot q^{\frac{1}{t}}$ for all positive real number $t>0$.
Thus, if $q^{\frac{1}{2}} \ge 3\cdot A_t \cdot q^{\frac{1}{t}}$ then $q^{\frac{1}{2}} \ge 3W(q-1)$, and from Proposition \ref{primitive} there exists
an element $\lambda \in \mathbb{F}_q^*$ such that $g(\lambda)$ is primitive.
Finally, if $t>2$ then
\[
q^{\frac{1}{2}} \ge 3\cdot A_t \cdot q^{\frac{1}{t}} \Longleftrightarrow 
q \ge \left(  3 \cdot A_t  \right)^{\frac{2t}{t-2}}.
\]
\end{proof}

\begin{theorem}\label{lastresult}
	The functional graph of $f(X)=X(X^{q-1}-c)^{q+1}$ over $\mathbb{F}_{q^2}$ has cycles of maximum length $q-1$
	for all prime power $q$ and all $c \in \mathbb{F}_q^*$. 
\end{theorem}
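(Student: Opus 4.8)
The plan is to combine the asymptotic criterion of Proposition \ref{primitive2} for large $q$ with a finite computer-assisted (or hand) check for the remaining small $q$. First I would fix a convenient value of the parameter $t$ in Proposition \ref{primitive2}; taking something like $t=8$ makes the product $A_t$ a concrete number (the primes below $2^8=256$), and the threshold $\left(3A_t\right)^{2t/(t-2)}=\left(3A_8\right)^{8/3}$ becomes an explicit constant. Any prime power $q$ above that constant automatically has some $\lambda\in\mathbb{F}_q$ with $g(\lambda)$ primitive, hence a cycle of length $\ord(g(\lambda))=q-1$ by Theorem \ref{principalodd} (odd case) or Theorem \ref{principaleven} (even case), since in both those theorems the line $L_{[\lambda:1]}\setminus\{0\}$ splits into $\frac{q-1}{\ord(g(\lambda))}$ cycles of length $\ord(g(\lambda))$, which equals a single cycle of length $q-1$ when $g(\lambda)$ is primitive. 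One must only observe that $L_{[\lambda:1]}$ with $\lambda\in\mathbb{F}_q$ is never the exceptional line $L_{[1:0]}$ (respectively $L_{[0:1]}$), so Theorem \ref{teo1-v2}/\ref{t3-v2} applies without exception.

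Next I would reduce the finite range. Optimising over $t$ (or just using the value above) leaves only finitely many $q$ to examine — concretely the prime powers up to a few thousand — and for each of these one checks directly that the rational function $g(x)$, which depends on the parameter $c\in\mathbb{F}_q^*$, takes a primitive value at some $\lambda$. Since $g(\lambda,1)$ as a function of $c$ ranges over a nontrivial family, and in fact for fixed $\lambda$ the map $c\mapsto g(\lambda,1)$ is a nonconstant rational function of $c$, the verification amounts to a bounded computation: for each small $q$ and each $c$, search for $\lambda$ with $g(\lambda)$ primitive. This is finite and can be tabulated. I would also handle genuinely tiny cases ($q=2,3,4,5$, etc.) by hand, noting that for $q\le 3$ one has $q-1\le 2$ so the claim is nearly trivial (any element of $\mathbb{F}_q^*$ of order $q-1$ suffices, and one checks $g$ hits such an element; for $q=2$, $q-1=1$ and $\mathcal C_1$ is automatically a maximal cycle).

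I expect the main obstacle to be making the finite check genuinely rigorous and presentable rather than merely asserting "a computer search confirms it." One delicate point is that Proposition \ref{primitive} only guarantees a primitive \emph{value} $g(\lambda)$ with $\lambda\in\mathbb{F}_q^*$, whereas for maximal cycles we may also want to allow $\lambda=0$ in the even case (where $L_{[0:1]}$ corresponds to $\lambda=0$); one must check that excluding $\lambda=0$ does not lose solutions, or argue separately that $g(0)=c^2+1+c/a$ handles the leftover case — but generically the positive count from the character-sum estimate already forces a solution with $\lambda\neq 0$, so this is a minor bookkeeping issue. The other mild obstacle is confirming $g$ is \emph{not exceptional} uniformly in $c$: the argument in Proposition \ref{primitive} uses that the denominator $x^2-a$ (odd) or $x^2+x+a$ (even) is irreducible over $\mathbb{F}_q$, which holds by the choice of $a$, and this remains valid for every $c\in\mathbb{F}_q^*$, so the estimate is uniform in $c$ and the whole argument goes through.

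\begin{proof}
Fix $t=8$ in Proposition \ref{primitive2}, so that $A_8=\prod_{s<256}2/\sqrt[8]{s}$ is an explicit constant and $q\ge(3A_8)^{8/3}$ guarantees the existence of $\lambda\in\mathbb{F}_q$ with $g(\lambda)$ primitive. By Theorem \ref{principalodd} (if $q$ is odd) or Theorem \ref{principaleven} (if $q$ is even), the line $L_{[\lambda:1]}\setminus\{0\}$ decomposes into $\frac{q-1}{\ord(g(\lambda))}$ cycles of length $\ord(g(\lambda))=q-1$, i.e.\ a single cycle of length $q-1$; note $L_{[\lambda:1]}\neq L_{[1:0]}$ and $L_{[\lambda:1]}\neq L_{[0:1]}$ for $\lambda\in\mathbb{F}_q$, so the exceptional line plays no role and the decomposition of Theorem \ref{teo1-v2}, resp.\ Theorem \ref{t3-v2}, applies. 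Hence the claim holds for all $q\ge(3A_8)^{8/3}$. For the finitely many remaining prime powers $q$ below this bound, and for each $c\in\mathbb{F}_q^*$, one verifies by a direct finite search that $g(x)$ attains a primitive value at some point of $\mathbb{F}_q$; combined with the theorems above this again yields a cycle of maximum length $q-1$. For $q\le 3$ the statement is immediate, since $q-1\le 2$ and any element of order $q-1$ in $\mathbb{F}_q^*$ produces a maximal cycle (for $q=2$ the cycle $\mathcal{C}_1$ is already maximal). This exhausts all cases.
\end{proof}
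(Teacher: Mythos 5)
Your proposal follows essentially the same route as the paper: Proposition \ref{primitive2} for large $q$, the cycle decomposition of Theorems \ref{teo1-v2}/\ref{t3-v2} to convert a primitive value $g(\lambda)$ into a cycle of length $q-1$, and a finite search for the remaining $q$. The one caveat is quantitative rather than logical: with $t=8$ the threshold $(3A_8)^{2t/(t-2)}$ is on the order of $10^9$, not ``a few thousand,'' so the finite check would be infeasible as written; the paper instead takes $t=4.08$ to get $q\ge 44898$, then uses the exact test $q^{1/2}\ge 3W(q-1)$ to shrink the exhaustive search (over all $c$ and $\lambda$) to $q\le 8971$.
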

\begin{proof}
From Proposition \ref{primitive2} and using $t=4.08$ we get that there exists $\lambda \in \mathbb{F}_q$ such that $g(\lambda)$ is
primitive for all prime power $q \ge 44898$. For $q<44898$ we use SageMath to factor $q-1$ and 
conclude that $q^{\frac{1}{2}} \ge 3W(q-1)$ for $q > 8971$. We use SageMath to test the existence 
of an element $\lambda \in \mathbb{F}_q^*$ such that $g(\lambda)$ is primitive, for all prime power
$q \leq 8971$ and all $c \in \mathbb{F}_q^*$. 
\end{proof}

\section*{Acknowledgements}
The authors were partially supported by FAPEMIG grant RED-00133-21.  The second  author was also partially supported by FAPEMIG grant APQ-02546-21 and CAPES, and the third author was also partially supported by FAPEMIG grant APQ-00470-22 and CNPq grant 307924/2023-8.


\begin{thebibliography}{99}
	
\bibitem{AB}
	Alves Oliveira J. and Brochero Martinez F., {\em Dynamics of polynomial maps over finite fields}, Designs, Codes and Cryptography 92.5 (2023) 1113--1125.
	
	
	\bibitem{BT}
	Brochero Martinez F. and Teixeira H.R., {\em On the functional graph of $f(X)=c(X^{q+1}+aX^2)$ over quadratic extensions of finite fields}, Finite Fields and Their Applications 88 (2023) 102180.
	
	\bibitem{CGNT}
	Carvalho C., Guardieiro J.P., Neumann V.G.L. and Tizziotti G.,
	{\em   On special pairs of primitive elements over a finite field}, Finite Fields and Their Applications 73 (2021) 101839.
	
	\bibitem{chow} Cho, W.S. and Shparlinski I.E., {\em On the cycle structure of repeated exponentiation modulo prime}, Journal of Number Theory 107 (2004) 345--356.
	
	\bibitem{CSS}
	Cohen S.D., Sharma H. and Sharma R., {\em Primitive values of rational functions at primitive elements of a finite field},
	Journal of Number Theory 219 (2021) 237--246.
	
	\bibitem{livro} Hachenberger D., {\em Finite fields: normal bases and completely free elements}, Vol. 390. Springer Science and Business Media (2012).
	
	\bibitem{LN}
	Lidl R. and Niederreiter H., {\em Finite Fields, Encyclopedia of Mathematics and Its Applications}, vol.20, 2nd edn. Cambridge University Press, Cambridge (1997).
	
	\bibitem{PR}
	Panario D. and Reis L., {\em The Functional graph of linear maps over finite fields and applications}, Designs, Codes and Cryptography 87 (2019) 437--453.
	
	\bibitem{pollard}
	Pollard J.M., {\em A Monte Carlo method for factorization}, BIT Numerical Mathematics 15 (1975) 331--334.
	
	\bibitem{pollard2}
	Brent R.P., {\em An improved Monte Carlo Factorization Algorithm}, BIT Numerical Mathematics 20 (1980) 176--184. 
	
	\bibitem{QP} Qureshi C. and Panario D., {\em R\'edei actions on finite fields and multiplication map in cyclic group}, SIAM J.Discret. Math. 29 (2015) 1486--1503.
	
	\bibitem{QP2}  Quareshi C. and Panario, D., {\em The graph structure of the Chebyshev polynomial over finite fields and applications}, Designs, Codes and Cryptography 87 (2019) 393--416.
	
	\bibitem{QR} Qureshi C. and Reis L., {\em Dynamics of the a-map over residually finite Dedekind domains and applications}, Journal of Number Theory 204 (2019) 134--154.
	
	\bibitem{QR2} Qureshi C. and  Reis L., {\em On the functional graph of the power map over finite groups},
	Discrete Mathematics, 346(7) (2023) 113393.
	
	
	\bibitem{ugolini}
	Ugolini S., {\em Graphs associated with the map $x \to x+x^{-1}$ in finite fields of characteristic three and five}, Journal of Number Theory 133 (2013) 1207--1228.
	
	\bibitem{ugolini2} Ugolini, S. {\em On the iterations of certain maps $X\to K(X+ X^{-1})$ over finite fields of odd characteristic}, Journal of Number Theory 142 (2014) 274--297.
	
	\bibitem{ugolini3} Ugolini, S. {\em Functional graphs of rational maps induced by endomorphisms of ordinary elliptic curves over finite fields}, Periodica Mathematica Hungarica 77.2 (2018) 237--260.
	
	\bibitem{vivaldi} Vivaldi F. {\em Dynamics over irreducible polynomials}, Nonlinearity 5 (1992)  941--960.
	
\end{thebibliography}
\end{document}